\def\Succ{\mathop{\mathrm{Succ}}\nolimits}
\def\ImmSucc{\mathop{\mathrm{IS}}\nolimits}
\def\Str{\mathop{\mathrm{Str}}\nolimits}
\def\branch#1#2{\mathrm{branch}^{#1}(#2)}
\def\nbrel#1#2{R\ifstrempty{#1}{}{_{#1}}\ifstrempty{#2}{}{^{#2}}}
\def\rel#1#2{\nbrel{\ifstrempty{#1}{}{\str{#1}}}{#2}}
\def\Sym{\mathop{\mathrm{Sym}}\nolimits}
\def\arity{\mathop{\mathrm{a}}\nolimits}
\newcommand{\conc}{^\smallfrown}           
\newcommand{\card}[1]{\left| #1 \right|}    
\DeclareMathOperator{\val}{val}       
\def\tp{\mathop{\mathrm{tp}}\nolimits}
\theoremstyle{definition}
\newtheorem{definition}{Definition}[section]
\newtheorem{example}{Example}
\newtheorem*{remark*}{Remark}
\newtheorem*{claim*}{Claim}
\theoremstyle{remark}
\newtheorem{remark}{Remark}[section]
\theoremstyle{plain}
\newtheorem{theorem}{Theorem}[section]
\newtheorem{corollary}[theorem]{Corollary}
\newtheorem{prop}[theorem]{Proposition}
\newtheorem{observation}[theorem]{Observation}
\newtheorem{lemma}[theorem]{Lemma}
\newtheorem{conjecture}[theorem]{Conjecture}
\newtheorem{question}[theorem]{Question}
\newtheorem{problem}[theorem]{Problem}
\newtheorem{claim}[theorem]{Claim}
\def\str#1{\mathbf{#1}}
\def\Fraisse{Fra\"{\i}ss\' e}
\begin{document}

\begin{frontmatter}[classification=text]



\author[sam]{Samuel Braunfeld\thanks{Supported by a project that has received funding from the European Research Council (ERC) under the European Union’s Horizon 2020 research and innovation programme (grant agreement No 810115) and by project 21-10775S of the Czech Science Foundation (GA\v CR).}}
\author[david]{David Chodounsk\'y\thanks{Supported by project 21-10775S of the Czech Science Foundation (GA\v CR).}}
\author[noe]{No\'e de Rancourt\thanks{Acknowledges support from the Labex CEMPI (ANR-11-LABX-0007-01).}}
\author[honza]{Jan Hubi\v cka\thanks{Supported by a project that has received funding from the European Research Council (ERC) under the European Union’s Horizon 2020 research and innovation programme (grant agreement No 810115) and by project 21-10775S of the Czech Science Foundation (GA\v CR).}}
\author[jamal]{Jamal Kawach\thanks{Supported by project 21-10775S of the Czech Science Foundation (GA\v CR).}}
\author[matej]{Mat\v ej Kone\v{c}n\'{y}\thanks{Supported by project 21-10775S of the Czech Science Foundation (GA\v CR) in the earlier stages of the project. In the later stages of the project supported by the European Research Council (Project POCOCOP, ERC Synergy Grant 101071674). Views and opinions expressed are however those of the authors only and do not necessarily reflect those of the European Union or the European Research Council Executive Agency. Neither the European Union nor the granting authority can be held responsible for them.}}

\begin{abstract}
This paper investigates big Ramsey degrees of unrestricted relational structures in (possibly) infinite languages. Despite significant progress in the study of big Ramsey degrees, the big Ramsey degrees of many classes of structures with finite small Ramsey degrees are still not well understood. We show that if there are only finitely many relations of every arity greater than one, then unrestricted relational structures have finite big Ramsey degrees, and give some evidence that this is tight. This is the first time finiteness of big Ramsey degrees has been established for a random structure in an infinite language. Our results represent an important step towards a better understanding of big Ramsey degrees for structures with relations of arity greater than two.
\end{abstract}
\end{frontmatter}

\section{Introduction}

Given a language $L$ and $L$-structures $\str{A}$, $\str B$ and $\str{C}$, we denote by ${\str B\choose \str A}$ the set of all embeddings $\str A\to \str B$, and we write
$\str{C}\longrightarrow {(\str{B})}^\str{A}_{k,\ell}$
to denote the following statement:
\begin{quote}
  For every colouring
  $\chi\colon{\str{C}\choose\str{A}}\to k$ there exists an embedding
  $f\colon\str{B}\to\str{C}$ such that $\chi$ takes at most $\ell$ values on
  $f[\str{B}]\choose \str{A}$.
\end{quote}
For a countably infinite structure~$\str{B}$ and its finite substruc\-ture~$\str{A}$,
the \emph{big Ramsey degree} of $\str{A}$ in $\str{B}$ is the least number
$\ell\in \mathbb \omega + 1$ 
such that $\str{B}\longrightarrow
  {(\str{B})}^\str{A}_{k,\ell}$ for every $k\in \mathbb \omega$; see~\cite{Kechris2005}.
A countably
infinite structure $\str{B}$ has \emph{finite big Ramsey degrees} if the big
Ramsey degree of $\str{A}$ in $\str{B}$ is finite for every finite substructure $\str{A}$ of $\str{B}$.

The study of big Ramsey degrees dates back to Ramsey's theorem itself which can be stated as $(\omega)\longrightarrow (\omega)^n_{k,1}$ for every $n,k\in \omega$, where we understand the ordinals $\omega$ and $n$ as structures with their natural linear orders (using the standard set-theoretic convention that $n = \{0,1,\ldots,n-1\}$ and $\omega = \{0, 1, \ldots\}$). However, the real origin of this project lies in the work of Galvin~\cite{Galvin68,Galvin69} who proved that 
the big Ramsey degree of pairs in the order of the rationals, denoted by $(\mathbb Q,\leq)$, is equal to 2, building on a lower bound by Sierpi\'nski~\cite{Sierpinski1933}. 
Subsequently, Laver in late 1969 proved that $(\mathbb Q,\leq)$ has in fact finite big Ramsey degrees, 
see~\cite{erdos1974unsolved,laver1984products}, and 
Devlin determined the exact values of $\ell$~\cite[Page~73]{devlin1979}. In 2006 Sauer~\cite{Sauer2006} proved that the Rado graph has finite big Ramsey degrees and Laflamme, Sauer, and Vuksanovic~\cite{Laflamme2006} obtained their exact values. Behind both this result and the result for $(\mathbb Q,\leq)$ was Milliken's tree theorem for a single binary tree. This was refined to unconstrained
structures in binary languages~\cite{Laflamme2006} and additional special
classes~\cite{dobrinen2016rainbow,laflamme2010partition,NVT2009}. Until very recently, Milliken's tree theorem has remained
the key ingredient in all results in the area (note that Ramsey's theorem is a special case of Milliken's tree theorem for the unary tree).
See also~\cite{dobrinen2021ramsey} for a recent survey.

Recently, Balko, Chodounsk{\'y}, Hubi{\v{c}}ka, Kone{\v{c}}n{\'y}, and Vena~\cite{Hubickabigramsey, Hubicka2020uniform} applied the vector version of Milliken's tree theorem to prove that the generic countable 3-uniform hypergraph has finite big Ramsey degrees. The method can be extended to prove finiteness of big Ramsey degrees of the generic countable $k$-uniform hypergraph for an arbitrary $k$, and in this paper we further extend these results and prove the following theorem (the definition of an unrestricted structure is given later, see Definition~\ref{def:unrestricted}):

\begin{theorem}\label{thm:structures}
Let $L$ be a relational language with finitely many relations of every arity greater than one and with finitely or countably many unary relations and let $\str H$ be a countable unrestricted $L$-structure where all relations are injective. Then $\str H$ has finite big Ramsey degrees.
\end{theorem}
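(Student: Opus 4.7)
My plan is to encode $\str{H}$ as a set of sequences in a suitable alphabet and apply a tree Ramsey theorem, following the strategy pioneered by Sauer for the Rado graph~\cite{Sauer2006} and extended by Balko, Chodounsk\'y, Hubi\v cka, Kone\v cn\'y, and Vena to generic uniform hypergraphs~\cite{Hubickabigramsey,Hubicka2020uniform}. I enumerate the vertices of $\str{H}$ as $v_0,v_1,\ldots$ and associate to each $v_n$ a label recording, for every nonempty tuple of earlier vertices and every relevant relation $R\in L$, whether $R$ holds on the tuple obtained by inserting $v_n$, together with the unary type of $v_n$. Since $\str{H}$ is unrestricted and all its relations are injective, every such labelled sequence corresponds to a genuine vertex, so embeddings of a fixed finite substructure $\str{A}$ into $\str{H}$ correspond exactly to tuples of labelled sequences satisfying a simple local condition determined by $\str{A}$.

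With this coding in place, the next step is to extract from any colouring of ${\str{H}\choose \str{A}}$ a ``strong subtree'' of the label tree on which only finitely many tree-types of $\str{A}$-copies survive. For a binary language this is essentially Milliken's theorem, and for a finite language of arbitrary arity one uses the vector version of Milliken's theorem as in~\cite{Hubickabigramsey}. The resulting strong subtree should still carry a copy of $\str{H}$, and the number of distinct label patterns of $\str{A}$-copies inside it is then bounded by a combinatorial count depending only on $|\str{A}|$ and the arities of relations occurring in $\str{A}$; this count is the sought big Ramsey degree.

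The main obstacle, and what appears to require genuine novelty, is the infinite unary part of $L$. Countably many unary relations partition the vertices of $\str{H}$ into countably many sorts, so the labels at each position draw from a countably infinite alphabet, preventing a direct application of a finite-alphabet Milliken-type theorem. The saving observation is that any fixed $\str{A}$ realises only finitely many unary types; the plan is therefore to separate the ``unary colour'' of each vertex from its relational label, group vertices into countably many sorted subtrees, and apply the tree Ramsey argument in a sorted fashion. A diagonal construction across the countably many sorts should produce a single embedded copy of $\str{H}$ in which every sort survives and within which the finite-language argument applies, yielding the required finite bound on the big Ramsey degree of $\str{A}$.
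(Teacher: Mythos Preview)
Your outline has the right skeleton (code vertices as nodes in a tree, apply vector Milliken, count embedding types), but you have misdiagnosed where the real work is. You flag the countably many unary relations as ``the main obstacle'' and propose a diagonal construction over sorts; in the paper this is the \emph{easy} step, handled by a simple blow-up $G=\{(v,i):v\in\omega,\ i\in\min(v+1,u)\}$ that reduces to the unary-free case in a page. No diagonalisation is needed.

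The genuine obstacle, which your proposal glosses over, is the case where $L$ has relations of unboundedly large arity (finitely many of each arity, but infinitely many arities in total). In that case the valuation/vector tree is infinite-dimensional, and Milliken's theorem (Theorem~\ref{thm:Milliken}) only applies to finite-dimensional vector trees. Your sentence ``for a finite language of arbitrary arity one uses the vector version of Milliken's theorem as in~\cite{Hubickabigramsey}'' does not cover this: the language is not finite, and your claim that the count ``depends only on $|\str{A}|$ and the arities of relations occurring in $\str{A}$'' is exactly what must be \emph{proved}. The paper's contribution here is the notion of a $k$-enveloping embedding (Definition~\ref{defn:enveloping}, Lemma~\ref{lem:enveloping}) and the envelope bound (Lemma~\ref{lem:envelopes}): for each fixed $k=|\str A|$ one constructs an embedding $\str H\to\str G$ so that every $k$-element image set sits inside a valuation tree of height at most $R(k)$, which is what allows one to invoke Milliken on only $R(k)$ many coordinates. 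Without this, your strong-subtree step has no finite-dimensional tree to run on.

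Finally, your proposal does not mention the ``unrestricted'' part of the hypothesis at all: passing from the universal $L$-hypergraph to an $\mathcal F$-free structure (where each $\str F\in\mathcal F$ is covered by a relation) needs an additional argument, carried out in the proof of Theorem~\ref{thm:structures} by stripping bad tuples and summing over finitely many preimage types.
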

Note that one has to require that all relations are injective as otherwise, by repeating vertices, one can use higher-arity relations as lower-arity and thus effectively have infinitely many binary relations even in this context.

We believe that this result is the limit of how far Milliken's tree theorem can be pushed in this area (at least by using the \emph{passing number representation} and its generalisations). In Section~\ref{sec:inf} we give evidence for this and discuss infinite lower bounds. In fact, we conjecture that this theorem is tight when there are only finitely many unary relations (see Section~\ref{subsec:inf} and Conjecture~\ref{conj:inf}).

Besides Milliken's tree theorem, other partition theorems have been used in the area, such as the Carlson--Simpson theorem~\cite{Hubickabigramsey2,balko2021big,Hubicka2020CS}, various custom theorems proved using forcing~\cite{coulson2022SDAP,dobrinen2017universal,dobrinen2019ramsey,zucker2020}, as well as some recent non-forcing custom tree theorems~\cite{Balko2023,Balko2023Sucessor}. While we only use Milliken's theorem here, the vector tree structure we develop is, to a large degree, inherent to the problem, not to the method. For this reason we believe that our development of the concept of valuation trees (extending~\cite{Hubicka2020uniform}) and $k$-enveloping embeddings will serve as an important basis for future big Ramsey theorems for structures with relations of arity greater than two.

\section{Preliminaries}

A \emph{relational language} $L$ is a collection of symbols, each having an associated \emph{arity}, denoted by $\arity(\rel{}{}) \in \omega$. An \emph{$L$-structure} $\str A$ consists of a \emph{vertex set} $A$ and an \emph{interpretation} of every $\rel{}{}\in L$, which is $\rel{A}{} \subseteq A^{\arity(\rel{}{})}$.
We say that a relation $\rel{}{}$ is \emph{injective} in $\str A$ if every tuple $\bar{x}\in \rel{A}{}$ is injective (i.e. contains no repeated occurrences of vertices) and it is \emph{symmetric} if whenever $\bar{x}\in \rel{A}{}$ and $\bar{y}$ is a permutation of $\bar{x}$ then $\bar{y}\in \rel{A}{}$. Equivalently, we can consider a symmetric relation to be a subset of ${A\choose {\arity(\rel{}{})}}$. An \emph{$L$-hypergraph} is an $L$-structure where all relations are injective and symmetric and every tuple is in at most one relation (so it can be seen as an edge-colored hypergraph with the number of colours for each arity given by $L$).

We adopt the standard model-theoretic notions of embeddings etc. with one exception: Unless explicitly stated otherwise, every structure in this paper will be implicitly equipped with an enumeration (i.e. a linear order which has type $\omega$ for countably infinite structures) and all embeddings will be monotone with respect to the enumerations. When we do not explicitly describe the enumeration, one can pick an arbitrary one. This is common in the area, as (for structures with no algebraicity) working with enumerated structures preserves whether they have finite big Ramsey degrees or not (see e.g. Section~1 of~\cite{zucker2020}).

Since all structures will be countable (that is, finite or countably infinite), we can assume without loss of generality that the vertex set of every structure is either some natural number $n$ or $\omega$, the symbol $\leq$ will always denote the standard order of natural numbers, and all embeddings will be monotone with respect to $\leq$. We will always assume that $\leq\notin L$. If $\str A$ and $\str B$ are $L$-structures, the symbol $\str B\choose \str A$ denotes the set of all embeddings $\str A\to \str B$. (Remember that these are monotone with respect to $\leq$.)

Given a class of finite and countably infinite structures $\mathcal C$, a structure $\str A$ is \emph{universal for $\mathcal C$} if for every $\str B\in \mathcal C$ there exists an embedding $\str B\to \str A$. Examples of universal structures come, for example, from \Fraisse{} theory which, under some conditions on $\mathcal C$ (e.g. strong amalgamation) produces special (homogeneous) unenumerated structures which retain their universality for all enumerations of members of the respective class even when enumerated.

\begin{definition}\label{def:unrestricted}
An $L$-structure $\str F$ is \emph{covered by a relation} if there is some relation $\rel{}{}\in L$ and a tuple $\bar x$ containing all vertices of $\str F$ such that $\bar x\in \rel{F}{}$. If $\mathcal F$ is a collection of $L$-structures and $\str A$ is an $L$-structure such that there is no $\str F\in \mathcal F$ with an embedding $\str F\to \str A$, we say that $\str A$ is \emph{$\mathcal F$-free}. We say and an $L$-structure $\str A$ is \emph{unrestricted} if there is a family $\mathcal F$ containing only finite $L$-structures which are covered by a relation such that $\str A$ is $\mathcal F$-free and universal for all countable $\mathcal F$-free structures.
\end{definition}

\subsection{Milliken's tree theorem}

Our argument will make use of the vector form of Milliken's tree theorem. In order to formulate it, we need to give some standard definitions, see e.g.~\cite{todorcevic2010introduction}.
Given an integer~$\ell$, we use the set-theoretical convention $\ell = \{ 0,1,\ldots,\ell-1 \}$.

A \emph{tree} is a (possibly empty) partially ordered set $(T, <_T)$ such
that, for every $t \in T$, the set $\{ s \in T : s <_T t \}$ is finite and linearly ordered by $<_T$.
All trees considered in this paper will be countable.
All nonempty trees we consider are \emph{rooted}, that is, they have a unique minimal element called the \emph{root} of the tree.
An element $t\in T$ of a tree $T$ is called a \emph{node} of $T$ and its \emph{level},
denoted by $\card{t}_T$, is the size of the set $\{s \in T : s <_T t\}$.
Note that the root has level 0.
For $D \subseteq T$, we write $L_T(D) = \{ \card{t}_T : t \in D \}$ for the \emph{level set} of $D$ in $T$.
We use $T(n)$ to denote the set of all nodes of $T$ at level $n$,
and by $T({<}n)$ the set $\{ t\in T \colon \card{t}_T < n \}$.
The \emph{height} of $T$ is the smallest natural number $h$ such that $T(h)=\emptyset$.
If there is no such number $h$, then we say that the height of $T$ is $\omega$.
We denote the height of $T$ by $h(T)$.

Given a tree $T$ and nodes $s, t \in T$ we say that $s$ is a \emph{successor} of $t$ in $T$ if $t \leq_T s$.
The node $s$ is an \emph{immediate successor} of $t$ in $T$  if
$t<_T s$ and there is no $s'\in T$ such that $t<_T s'<_T s$.
We denote the set of all successors of $t$ in $T$ by $\Succ_T (t)$
and the set of immediate successors of $t$ in $T$ by $\ImmSucc_T (t)$.
We say that the tree $T$ is \emph{finitely branching} if $\ImmSucc_T (t)$ is finite for every $t \in T$.

For $s, t \in T$, the \emph{meet} of $s$ and $t$, denoted by $s\wedge_T t$, is the largest $s' \in T$ such that $s' \leq_T s$ and $s' \leq_T t$.
A node $t\in T$ is \emph{maximal} in $T$ if it has no successors in $T$.
The tree $T$ is \emph{balanced} if it either has infinite height and no maximal nodes, or all its maximal nodes are in
$T(h-1)$, where $h$ is the height of $T$. Given $t\in T$ and a level $\ell$ such that $\ell \leq \lvert t\rvert$, we denote by $t|_\ell$ the (unique) predecessor of $t$ on level $\ell$ and call it the \emph{restriction} of $t$ to level $\ell$.

A \emph{subtree} of a tree $T$ is a subset $T'$ of $T$ viewed as a tree
equipped with the induced partial ordering
such that $s \wedge_{T'} t = s \wedge_{T} t$ for each $s,t \in T'$.

\begin{definition}
  A subtree $S$ of a tree $T$ is a \emph{strong subtree} of $T$
  if either $S$ is empty, or $S$ is nonempty and satisfies the following three conditions.
  \begin{enumerate}
    \item The tree $S$ is rooted and balanced.
    \item Every level of $S$ is a subset of some level of $T$, that is, for every $n < h(S)$ there exists $m \in \omega$ such that $S(n) \subseteq T (m)$.
    \item For every non-maximal node $s \in S$ and every $t \in \ImmSucc_T (s)$ the set
          $\ImmSucc_S (s) \cap \Succ_T (t)$ is a singleton.
  \end{enumerate}
\end{definition}

\begin{observation}\label{obs-subtree}
  If $E$ is a subtree of a balanced tree $T$, then there exists a strong subtree  $S \supseteq E$ of $T$ such that $L_T(E) = L_T(S)$.\qed
\end{observation}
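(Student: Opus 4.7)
The plan is to build $S$ inductively along the levels of $E$. Enumerate $L_T(E) = \{\ell_0 < \ell_1 < \cdots\}$ in increasing order and construct a sequence of sets $S_i \subseteq T(\ell_i)$ whose union will be $S$. Set $S_0 = \{r\}$ where $r$ is the (unique, since $E$ is rooted) $E$-node at level $\ell_0$. For $i > 0$, having defined $S_{i-1}$, process each pair $(s, t)$ with $s \in S_{i-1}$ and $t \in \ImmSucc_T(s)$ as follows: if some $e \in E$ with $\card{e}_T \geq \ell_i$ satisfies $e|_{\card{s}_T + 1} = t$, then select $s^{(t)} := e|_{\ell_i}$; otherwise pick any $T$-successor of $t$ at level $\ell_i$, which exists since $T$ is balanced and $\ell_i < h(T)$. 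Put $S_i := \{s^{(t)} : s \in S_{i-1},\ t \in \ImmSucc_T(s)\}$ and $S := \bigcup_i S_i$.

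The critical step is checking that the first clause in the recursion is well-defined. Suppose $e_1, e_2 \in E$ both satisfy $\card{e_j}_T \geq \ell_i$ and $e_j|_{\card{s}_T + 1} = t$. Then the meet $e_1 \wedge_T e_2$ lies above $t$, hence at level strictly greater than $\card{s}_T = \ell_{i-1}$. Since $E$ is a subtree of $T$, this meet belongs to $E$, so its level is in $L_T(E)$. But there is no element of $L_T(E)$ strictly between $\ell_{i-1}$ and $\ell_i$, so its level is at least $\ell_i$, forcing $e_1|_{\ell_i} = e_2|_{\ell_i}$. This meet-preservation argument is the subtlest point of the proof and is exactly where the subtree hypothesis on $E$ is used.

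Next I verify the three strong-subtree conditions. Condition (2) is immediate from $S_i \subseteq T(\ell_i)$. For (3), the $S$-immediate successors of $s \in S_{i-1}$ are precisely the elements of $S_i$ lying $T$-above $s$, namely $\{s^{(t)} : t \in \ImmSucc_T(s)\}$, and the construction places exactly one such node above each $t$. For (1), $S$ has the single minimum $r$; if $L_T(E)$ is finite with maximum $\ell_k$ then all maximal nodes of $S$ sit in level $k$ of $S$, and if $L_T(E)$ is infinite then the construction provides successors at every subsequent $\ell_i$, so $S$ has no maximal nodes. Finally $L_T(S) = \{\ell_i\}_i = L_T(E)$ by construction.

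It remains to see $E \subseteq S$, which I prove by induction on $i$ in the following stronger form: for every $e \in E$ with $\card{e}_T \geq \ell_i$, we have $e|_{\ell_i} \in S_i$. The base $i=0$ holds because $r \leq_T e$ for every $e \in E$, so $e|_{\ell_0} = r \in S_0$. For the inductive step, set $s := e|_{\ell_{i-1}}$ (so $s \in S_{i-1}$ by induction) and $t := e|_{\card{s}_T + 1} \in \ImmSucc_T(s)$; then $e$ itself witnesses the first clause of the recursion for $(s, t)$, so $s^{(t)} = e|_{\ell_i}$ lies in $S_i$. Specializing to $e \in E(\ell_i)$ gives $e \in S_i$, completing the proof.
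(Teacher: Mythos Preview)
Your proof is correct; the paper leaves this observation without proof (it is marked with a \qed{} immediately after the statement), so there is nothing to compare against. The one point you leave implicit is that $S$ is a \emph{subtree} of $T$ (i.e.\ closed under $T$-meets), which the paper's definition of strong subtree formally requires before conditions (1)--(3); this follows easily from your construction by the same style of argument you used for well-definedness, and is not a real gap.
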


A \emph{vector tree} of dimension $d\in\omega+1$ (often also called \emph{product tree}) is a sequence $\mathbf T = (T_i : i\in d)$ of trees having the same height $h(T_i)$ for all $i \in d$.
This common height is the \emph{height} of $\mathbf T$
and is denoted by $h(\mathbf T)$. A vector tree $\mathbf T = (T_i : i\in d)$ is \emph{balanced}
if the tree $T_i$ is balanced for every $i \in d$.

If $\mathbf T = (T_i : i\in d)$ is a vector tree, then a \emph{vector subset} of $\mathbf T$ is a sequence
$\mathbf D = (D_i : i\in d)$ such that $D_i \subseteq T_i$ for every $i \in d$.
We say that $\mathbf D$ is \emph{level compatible} if there exists $L \subseteq \omega$ such
that $L_{T_i} (D_i ) = L$ for every $i \in d$.
This (unique) set $L$ is denoted by $L_\mathbf T(\mathbf D)$
and is called the \emph{level set} of $\mathbf D$ in $\mathbf T$. If $k\in d$, we denote $\mathbf T\restriction_k = (T_i : i\in k)$.

\begin{definition}
\label{def:T1T2}
  Let $\mathbf T = (T_i : i\in d)$ be a vector tree.
  A \emph{strong vector subtree} of~$\mathbf T$ is a level compatible vector subset $\mathbf S = (S_i : i\in d)$ of $\mathbf T$
  such that $S_i$ is a strong subtree of $T_i$ for every $i \in d$.
\end{definition}

For every $k \in \omega+1$ with $k \leq h(\mathbf T)$, we use $\Str_k (\mathbf T)$ to denote the set of all strong vector
subtrees of $\mathbf T$ of height $k$.
We also use $\Str_{\leq k}(\mathbf T )$ to
denote the set of all strong vector subtrees of $\mathbf T$ of height at most $k$.

\begin{theorem}[Milliken~\cite{Milliken1979}]\label{thm:Milliken}
  For every rooted, balanced and finitely branching vector tree $\mathbf T$ of infinite height and finite dimension,
  every non-negative integer $k$ and every finite colouring of $\Str_k (\mathbf T)$ there is $\mathbf S \in \Str_\omega(\mathbf T)$
  such that the set $\Str_k (\mathbf S)$ is monochromatic.
\end{theorem}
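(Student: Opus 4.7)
The plan is to factor the proof through the \emph{Halpern--Läuchli theorem} (HL), which states that for a vector tree $\mathbf T = (T_i : i < d)$ as in the hypothesis and any finite colouring of the union of level-products $\bigcup_{n} \prod_{i < d} T_i(n)$, there exist level-compatible strong subtrees $S_i \subseteq T_i$ of infinite height whose union of level-products is monochromatic. HL will be the combinatorial engine; Milliken's theorem then follows by a fusion argument organised as an induction on the height parameter $k$.

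First I would establish HL. The two standard routes are (i) a direct combinatorial proof by induction on the dimension $d$, using iterated pigeonhole on the finite sets $\prod_{i<d} T_i(n)$ together with a K\"onig-style compactness step, and (ii) Harrington's forcing proof, which uses a Cohen-style forcing to select a ``generic'' uniformly coloured level-product and absoluteness to transfer the resulting construction back to the ground model. The finite-branching hypothesis is essential in both, as it is what makes the level-products finite and enables the pigeonhole.

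With HL in hand, I would prove Milliken by induction on $k$. The case $k=1$ is exactly HL, since a strong vector subtree of height one is a level-compatible tuple of nodes. For the step $k \to k+1$, fix a finite colouring $\chi$ of $\Str_{k+1}(\mathbf T)$ and build $\mathbf S \in \Str_\omega(\mathbf T)$ level by level via a diagonal fusion. At stage $n$, having fixed an initial segment $\mathbf S({<}n)$, enumerate all $\mathbf E \in \Str_{\leq k}(\mathbf S({<}n))$ that may serve as the lower part of some $\mathbf S_{k+1} \in \Str_{k+1}(\mathbf S)$ whose top level lies strictly above level $n$. For each such $\mathbf E$, the completions of $\mathbf E$ to a height-$(k+1)$ strong vector subtree correspond to level-compatible tuples of suitably placed descendants in $\mathbf T$ of the leaves of $\mathbf E$; apply HL to the product of subtrees rooted above those leaves (repeatedly, once per $\mathbf E$ in the finite enumeration) to obtain a thinning of $\mathbf T$ above level $n$ on which $\chi$ is constant on every completion of every such $\mathbf E$. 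The $n$-th level of $\mathbf S$ is then selected inside this thinned vector tree, and the fusion continues; the limit produces $\mathbf S \in \Str_\omega(\mathbf T)$ with $\Str_{k+1}(\mathbf S)$ monochromatic under $\chi$.

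The main obstacle is HL itself: everything downstream is organised book-keeping, but HL requires genuine infinite-combinatorial content and is the place where the finite-branching hypothesis is indispensable. Within the fusion, the subtle point is that a single $\mathbf S_{k+1} \in \Str_{k+1}(\mathbf S)$ may have its lower $k$ levels scattered across many stages of the construction, so at each stage one must stabilise $\chi$ on completions of every possible lower part---hence the need to enumerate over all of $\Str_{\leq k}(\mathbf S({<}n))$, not only $\Str_k$, and to apply HL once per such $\mathbf E$.
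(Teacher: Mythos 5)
The paper does not prove this statement: it is quoted from Milliken's 1979 paper (see also Todor\v cevi\'c's book, which the authors cite for the surrounding definitions), so there is no internal proof to compare against. Your overall architecture --- the Halpern--L\"auchli theorem as the engine and base case $k=1$, then induction on $k$ organised as a fusion --- is the standard route and is sound in outline.

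There is, however, a genuine gap in your induction step. A single application of HL to the derived vector tree of subtrees rooted at the immediate successors of the leaves of $\mathbf E$ stabilises the colour over the choice of \emph{one} additional level; it therefore only handles lower parts $\mathbf E$ of height exactly $k$, for which the missing datum of a completion is a single level product. For $\mathbf E$ of height $j<k$ (in particular the empty tree, which your enumeration over $\Str_{\leq k}(\mathbf S({<}n))$ includes), a completion to height $k+1$ requires choosing $k+1-j\geq 2$ further levels, i.e.\ a height-$(k+1-j)$ strong subtree of the derived trees, and no single HL application makes $\chi$ constant on all of these --- for $\mathbf E$ empty this would already be the full theorem above level $n$. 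Correspondingly, your claim that the fusion alone yields $\Str_{k+1}(\mathbf S)$ monochromatic cannot be right: what the fusion (correctly restricted to height-$k$ lower parts) actually produces is an infinite strong vector subtree $\mathbf S'$ on which the colour of a height-$(k+1)$ subtree depends only on its bottom $k$ levels. This induces a finite colouring of $\Str_k(\mathbf S')$, and one must then invoke the induction hypothesis (Milliken for $k$) to pass to $\mathbf S$ inside $\mathbf S'$ on which that induced colouring is constant. That closing step is absent from your plan --- indeed, as written, the induction hypothesis you set up is never used. With the enumeration restricted to height-$k$ lower parts and the final application of the induction hypothesis added, this becomes the standard proof.
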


\section{Valuation trees}

Given $n\in \omega+1$ and $0 < \ell < \omega$, we denote $I^n_{\ell} = \{(i_0,...,i_{\ell-1}) : n > i_0  > \cdots > i_{\ell-1} \geq 0\}$. If $n,\ell\in \omega+1$, we put $I^n_{<\ell} = \bigcup_{1\leq k < \ell} I^n_{k}$. Let $\sigma = (\sigma_1, \sigma_2,\ldots)$ be an infinite sequence of positive natural numbers, we will call it a \emph{signature}. Let $f\colon I^n_{<\omega} \to \omega$ be a function such that if $\bar{x}\in I^n_\ell$ then $f(\bar{x}) < \sigma_\ell$. We call such $f$ a \emph{valuation function of level $n$ and signature $\sigma$} (or a \emph{$\sigma$-valuation function of level $n$}, or just a \emph{valuation function of level $n$} if $\sigma$ is clear from the context), and write $\lvert f\rvert = n$. Given $i\in \omega$ we denote by $\sigma^{(i)}$ the \emph{$i$-shift of $\sigma$} defined by $\sigma^{(i)}_j = \sigma_{j+i}$.

Given a signature $\sigma$, let $\str T^\sigma = (T_0, T_1, \ldots)$ be the infinite-dimensional vector tree where $T_i$ consists of all $\sigma^{(i)}$-valuation functions of a finite level ordered by inclusion. Often $\sigma$ will be implicit from the context and we will write only $\str T$ for $\str T^\sigma$. Note that if $\sigma$ is constant 1 from some point on, $\str T^\sigma$ is essentially finite-dimensional (in the sense that from some point on, each $T_i$ is just the chain of constant zero valuation functions).

\medskip

Note that $T_0$ corresponds to the tree of all quantifier-free types of an $L$-hypergraph for $L$ having $\sigma_i - 1$ relations of arity $i+1$ and no unary relations: For example, let $\str M$ be a countably infinite graph (remember that it is enumerated with vertex set $\omega$) and fix some $n\in \omega$. A quantifier-free type of a vertex $v\in M$ over $n$ such that $v\notin n$ corresponds to just deciding which vertices from $n$ are neighbours of $v$, or in other words, we can represent it by a function $n\to 2$. Analogously, if $\str M$ is a countable 3-uniform hypergraph, quantifier-free types correspond to functions $I^n_2\to 2$.

In Section~\ref{sec:lhypergraphs} we will use $\str T^\sigma$ to prove finiteness of big Ramsey degrees for $L$-hypergraphs, and in the proof $T_0$ will carry a hypergraph structure, while the other trees $T_i$, $i>1$ will be auxiliary objects for the run of Milliken's theorem. Formally, the structure will be defined as follows (note that for technical reasons we are going to need two values to both represent non-relations).

\begin{definition}[An $L$-hypergraph on $T_0$]\label{defn:t0_structure}
Given a signature $\sigma$, define a language $L = \{ \rel{}{i,j} : 2\leq i < \omega, 1\leq j \leq \sigma_{i-1}-2 \}$ such that $\rel{}{i,j}$ has arity $i$ (note that if $\sigma_{i-1}\leq 2$ then there are no relations of arity $i$). For $\str T^\sigma = (T_0, T_1, \ldots)$, we define an $L$-hypergraph $\str G^\sigma$ with vertex set $T_0$ such that for every $i\geq 2$, every $1\leq j \leq \sigma_{i-1}-2$, and every $x_0,\ldots, x_{i-1}\in T_0$ with $\lvert x_0\rvert > \cdots > \lvert x_{i-1}\rvert$, we put
$$\{x_0, \ldots, x_{i-1}\} \in \nbrel{\str G^\sigma}{i,j} \iff x_0(\lvert x_1\rvert,\ldots, \lvert x_{i-1}\rvert) = j.$$

Equip $\str G^\sigma$ with the enumeration defined by $f\leq g$ if and only if either $\lvert f\rvert < \lvert g\rvert$, or $\lvert f\rvert = \lvert g\rvert$ and $f(\bar x) < g(\bar x)$, where $\bar x$ is the lexicographically smallest tuple where $f$ and $g$ differ.
\end{definition}

\medskip

A key element of our construction is a correspondence between strong vector subtrees of the vector tree $\str{T}^\sigma$ and the so-called valuation trees which we now define.
\begin{definition}
If $f$ is a $\sigma^{(i)}$-valuation function of level $n$, $g$ is a $\sigma^{(i+1)}$-valuation function of level $n$ and $h$ is a $\sigma^{(i)}$-valuation function of level $n+1$, we say that $h$ is an \emph{extension} of $f$ by $g$ if
$$h(x_0,\ldots,x_d) = \begin{cases}
f(x_0,\ldots,x_d) &\text{ if } x_0 < n,\\
g(x_1,\ldots,x_d) &\text{ if } x_0 = n\text{ and } d>0.
\end{cases}$$
We put $$f\conc g = \{h : h \text{ is an extension of $f$ by $g$}\}.$$
Note that $h\in \ImmSucc_{T_i}(f)$ and that there are always $\sigma^{(i)}_1$-many possible extensions of $f$ by $g$ which differ on their value at the singleton $n$.
\end{definition}

\begin{definition}\label{def:valuation_tree}
  Let $\sigma$ be an arbitrary signature, consider $\str T = \str T^\sigma$, let $k\in \omega + 1$ and let $(S_i : i\in k)$ be a strong vector subtree of $\str T\restriction_k$ of height at least $k$. We now define by induction on $k$ a subset of $S_0$ which we call the \emph{valuation tree $\val(S_i : i\in k)$}:

  If $k = 0$ then $\val() = \emptyset$. For $0 < k < \omega$, we put $S' = \val(S_1,\ldots,S_{k-1})$ using the induction hypothesis and define $\val(S_i : i\in k)$ by the following recursive rules:
  \begin{enumerate}
    \item The root of $\val(S_i : i\in k)$ is the root of $S_0$.
    \item If $f \in \val(S_i : i\in k)$, $g \in S'(\card{f}_{S_0})$, then $\ImmSucc_{S_0}(f) \cap \Succ_{T_0}(f\conc g) \subseteq \val(S_i : i\in k)$ (note that this set has size $\sigma^{(i)}_1$).
    \item There are no other nodes in $\val(S_i : i\in k)$.
  \end{enumerate}
  If $k=\omega$ then we put $\val(S_i : i\in k) = \bigcup_{j\in \omega} \val(S_i : i\in j)$. A tree $T \subseteq T_0$ is a \emph{valuation tree} if $T = \val(S_i : i\in k)$ for some $(S_i : i\in k) \in \Str_{\leq\omega}(\str{T})$.
\end{definition}

\begin{observation}\leavevmode
\begin{enumerate}
  \item The set $\val(S_i : i\in k)$ is a subtree of $S_0$, and hence also a subtree of~$T_0$, of height $k$.
  \item If $(S_i : i\in k)$ is a strong vector subtree of $\str T\restriction_k$ then $\val(S_i : i\in k-1)$ is a valuation tree and $\val(S_i : i\in k-1) \subseteq \val(S_i : i\in k)$.
  \item $T_0({<}k) = \val(T_i : i \in k)$ for every $k\in \omega+1$. \qed
\end{enumerate}
\end{observation}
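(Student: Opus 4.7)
My plan is to prove all three parts by induction on $k$, closely following the recursive definition of $\val$.

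For part~(1), the base cases $k = 0, 1$ are immediate from the rules. For the inductive step, suppose $S' = \val(S_1, \dots, S_{k-1})$ is a subtree of $S_1$ of height $k-1$. By Rule~(1) the root of $S_0$ lies in $\val(S_0, \dots, S_{k-1})$, and Rule~(2) says that for every $f$ at level $\ell$ of $S_0$ already in the tree and every $g \in S'(\ell)$, the set $\ImmSucc_{S_0}(f) \cap \Succ_{T_0}(f \conc g)$ is added. By the strong subtree property of $S_0$ this set has size exactly $\sigma_1$ and consists entirely of level-$(\ell+1)$ nodes of $S_0$. Applying this level by level for $\ell = 0, 1, \dots, k-2$ (the levels at which $S'$ is nonempty) yields a downward-closed subset of $S_0$ occupying exactly levels $0, 1, \dots, k-1$; downward closure makes it a subtree of $S_0$, and the level count gives height exactly $k$.

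For part~(2), the nontrivial content is the inclusion $\val(S_i : i \in k-1) \subseteq \val(S_i : i \in k)$, since the claim that $\val(S_i : i \in k-1)$ is itself a valuation tree is immediate from the fact that $(S_0, \dots, S_{k-2})$ is a strong vector subtree of $\str T \restriction_{k-1}$. By the inductive hypothesis applied to the shifted signature $\sigma^{(1)}$, one has $\val(S_1, \dots, S_{k-2}) \subseteq \val(S_1, \dots, S_{k-1})$, so the inner valuation tree used in the construction of $\val(S_0, \dots, S_{k-2})$ is contained in the one used for $\val(S_0, \dots, S_{k-1})$. Since Rule~(2) is monotone in this inner tree---fewer admissible $g$'s produce fewer added nodes---a straightforward induction on the level gives the desired containment.

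For part~(3), I induct on $k$, again applying the inductive hypothesis to the shifted signature $\sigma^{(1)}$ to obtain $\val(T_1, \dots, T_{k-1}) = T_1({<}k-1)$. In the construction of $\val(T_0, \dots, T_{k-1})$, for each $f$ at level $\ell < k-1$ the admissible $g$'s range over all of $T_1(\ell)$, that is, over all $\sigma^{(1)}$-valuation functions of level $\ell$. As $g$ varies, $\bigcup_g (f \conc g)$ exhausts $\ImmSucc_{T_0}(f)$: every immediate successor of $f$ in $T_0$ is determined by its values on length-${\geq}2$ tuples beginning with $\ell$ (captured by some $g$) together with its value on the singleton $\{\ell\}$ (captured by the $\sigma_1$ elements of $f \conc g$). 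Hence all of $T_0(\ell+1)$ is added, and by induction on levels $\val(T_0, \dots, T_{k-1}) = T_0({<}k)$. The case $k = \omega$ follows from the finite case by taking unions, using $\val(T_i : i \in \omega) = \bigcup_j \val(T_i : i \in j)$.

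The main delicate point is the bookkeeping of signature shifts when invoking the inductive hypothesis in parts~(2) and~(3); everything else is a direct unwinding of the recursion together with the defining properties of strong subtrees.
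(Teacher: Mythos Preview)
Your proof is correct and follows the natural inductive strategy dictated by the recursive definition of $\val$. The paper itself provides no proof of this observation (it ends immediately with \qed), treating all three parts as routine verifications; your write-up simply spells out the induction that the authors left implicit, including the signature-shift bookkeeping, and there is nothing to correct.
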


\begin{example}
\begin{figure}
\centering
\includegraphics[scale=0.9]{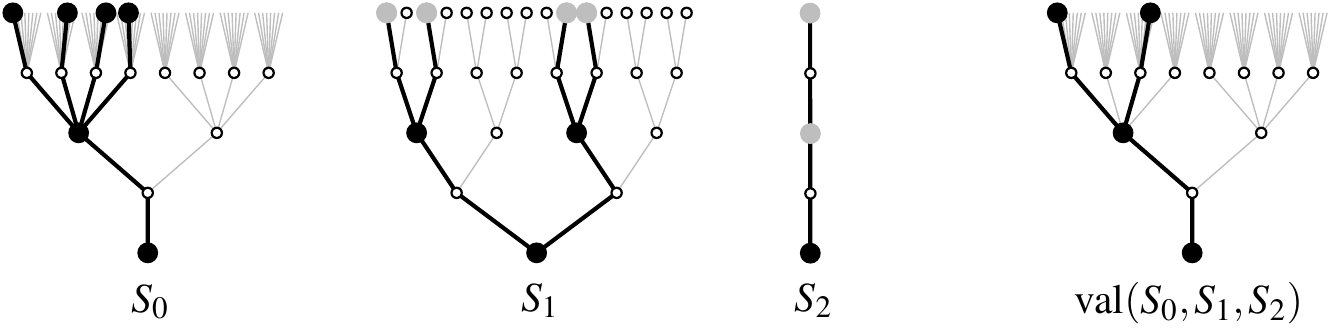}
\caption{A valuation tree (right) constructed from a strong vector subtree (left).}
\label{fig:T2b}
\end{figure}
Figure~\ref{fig:T2b} depicts an example of a valuation tree. In this example, $\sigma = (1,2,1,1,\ldots)$, hence $T_0$ consists of all functions which assign $0$ or $1$ to every decreasing sequence of length 2, $T_1$ is a binary tree and $T_k$ is just a chain for $k\geq 2$. Given a strong vector subtree $(S_0,S_1,S_2)$ of $(T_0,T_1,T_2)$ of height 3 (depicted by thick nodes and thick successor relations) we construct the corresponding valuation tree $\val(S_0,S_1,S_2) \subseteq T_0$. Note that the topmost level of $S_1$ and the two topmost levels of $S_2$ are actually not used in the construction of $\val(S_0,S_1,S_2) \subseteq T_0$. To depict this, they have grey colour.
\end{example}

Let $\sigma$ be an arbitrary signature, consider $\str T = \str T^\sigma$, and let $T$ and $T'$ be subtrees of $T_0$. A function $\psi \colon T \to T'$ is a \emph{structural embedding}
if it is an embedding of trees (preserving meets and relative heights of nodes), and for every $x_0,\ldots,x_{d-1} \in T$ with $\card{x_0} > \cdots> \card{x_{d-1}}$ it holds that
$$\psi(x_0)(\card{\psi(x_1)},\ldots,\card{\psi(x_{d-1})}) = x_0(\card{x_1},\ldots,\card{x_{d-1}}).$$

\begin{observation}\label{obs:structural_emb}
If $\psi\colon T\to T'$ is a structural embedding then it is also an embedding of the structures induced by $T_0$ on $T$ and $T'$ given by Definition~\ref{defn:t0_structure}.\qed
\end{observation}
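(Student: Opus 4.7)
The plan is to verify the three conditions that make $\psi$ an embedding of the enumerated $L$-hypergraphs which $T$ and $T'$ inherit from $\str{G}^\sigma$: injectivity, $\leq$-monotonicity for the enumeration of Definition~\ref{defn:t0_structure}, and preservation-and-reflection of every relation $\nbrel{}{i,j}$.

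Injectivity is immediate, since every tree embedding is injective. The preservation-and-reflection of relations is essentially the content of the defining equation of a structural embedding. Fix $x_0,\dots,x_{i-1}\in T$ with $\card{x_0}>\cdots>\card{x_{i-1}}$ in $T_0$; preservation of relative heights by $\psi$ gives $\card{\psi(x_0)}>\cdots>\card{\psi(x_{i-1})}$, so that both sides of
$$x_0(\card{x_1},\dots,\card{x_{i-1}}) \;=\; \psi(x_0)(\card{\psi(x_1)},\dots,\card{\psi(x_{i-1})})$$
are well-defined. Definition~\ref{defn:t0_structure} then tells us that each side equals a given value $j$ precisely when the corresponding unordered tuple lies in $\nbrel{}{i,j}$, so $\psi$ both preserves and reflects each relation.

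For $\leq$-monotonicity, I will use that the enumeration orders nodes first by $\card{\cdot}_{T_0}$ — respected by $\psi$ via its height preservation — and, among same-level nodes, by the lex-smallest tuple on which the two valuation functions disagree. For $x,y\in T$ at the same $T_0$-level, the structural equation transports the values at the lex-first disagreement tuple through $\psi$ with matching sign of inequality, so the secondary key of the enumeration is respected as well.

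I expect the relation-preservation step to carry the only real content, and it is already packaged into the defining equation of a structural embedding; the remaining checks are formal consequences of the structural-embedding axioms together with a careful reading of Definition~\ref{defn:t0_structure}.
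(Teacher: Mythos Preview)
Your verification of injectivity and of preservation and reflection of each relation $\nbrel{}{i,j}$ is correct, and this is exactly the content the paper records behind its bare \qed.

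The monotonicity argument, however, has a real gap. The structural-embedding identity only constrains $\psi(x_0)(\bar y)$ at tuples $\bar y=(\lvert\psi(x_1)\rvert,\ldots,\lvert\psi(x_{d-1})\rvert)$ whose entries are $T_0$-levels of nodes in $\psi[T]$; it says nothing about $\psi(x_0)$ at other tuples. For $x,y\in T$ on the same $T_0$-level, the lex-first disagreement tuple of $x,y$ need not consist of levels realised in $T$, and even when it does, the lex-first disagreement of $\psi(x),\psi(y)$ may lie at some lex-earlier tuple outside the controlled set, possibly with the opposite inequality. In fact the claim fails in general: with $\sigma=(1,2,1,1,\ldots)$ take $T=\{r,a,b\}\subseteq T_0$ where $\lvert r\rvert=1$, $\lvert a\rvert=\lvert b\rvert=2$, $a(1,0)=0$, $b(1,0)=1$; since $T$ has no node on level~$0$, the only instances of the structural identity involve singleton tuples (forced to~$0$), and one may choose a subtree $T'$ with $\lvert\psi(r)\rvert=2$, $\lvert\psi(a)\rvert=\lvert\psi(b)\rvert=4$, $\psi(a)(2,0)=1$, $\psi(b)(2,0)=0$, satisfying every axiom of a structural embedding while $a<b$ yet $\psi(a)>\psi(b)$. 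This does not harm the paper's applications: in the proof of Proposition~\ref{prop:main} the structural embedding $\psi$ is only composed with the enveloping embedding $\varphi$, whose image meets each $T_0$-level in at most one node, and there monotonicity of the composite follows directly from preservation of relative heights.
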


\begin{lemma}\label{obs:isom}
  Let $\sigma$ be an arbitrary signature and consider $\str T = \str T^\sigma$. For every $k \in \omega +1$ and every valuation subtree $T$ of $T_0$ of height $k$,
  there exists a unique structural embedding $f \colon T_0({<}k) \to T$.
\end{lemma}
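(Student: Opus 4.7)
The plan is to proceed by induction on $k$. The base case $k=0$ is trivial (the empty map is the unique structural embedding) and the case $k=\omega$ follows by taking the direct limit of the unique finite-height embeddings, so it suffices to treat the step $1 \le k < \omega$. Write $T = \val(S_i : i \in k)$ for a strong vector subtree $(S_i : i \in k)$ of $\str{T}\restriction_k$ with level set $L$, and denote by $\ell_n$ the $n$-th element of $L$. Set $S' = \val(S_i : 1 \le i < k)$; this is a valuation subtree of $T_1$ of height $k-1$, so by the inductive hypothesis it admits a unique structural embedding $\psi' \colon T_1({<}k-1) \to S'$. I would construct the desired $\psi \colon T_0({<}k) \to T$ recursively on the level. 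The root of $T_0$ must map to the root of $T$ (the root of $S_0$); given $\psi$ on level $n$ and a node $x \in T_0({<}k)$ at level $n+1$, one writes $x$ uniquely as an extension of $f = x|_n$ by some $g \in T_1(n)$ with new singleton value $v = x(n) \in \sigma_1$, and declares $\psi(x)$ to be the unique element of $\ImmSucc_{S_0}(\psi(f)) \cap \Succ_{T_0}(\psi(f) \conc \psi'(g))$ taking value $v$ at the singleton $\ell_n$. This intersection has size $\sigma_1$ with members differing exactly at $\ell_n$, so $\psi(x)$ is well-defined.

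Verifying that $\psi$ is a structural embedding is then a direct level-by-level calculation: for a decreasing tuple $n+1 = n_0 > n_1 > \cdots > n_{d-1}$, entries with $n_1 < n$ reduce to the structural condition already established for $\psi(f)$ using that $\psi(x)$ extends $\psi(f)$ in $T_0$; entries with $n_1 = n$ and $d \ge 2$ reduce, via the extension definition and the inclusion $\psi(x) \in \Succ_{T_0}(\psi(f) \conc \psi'(g))$, to the structural condition for $\psi'$ applied to $g$; and the $n_1 = n$, $d=1$ case is exactly the defining equality $\psi(x)(\ell_n) = v$.

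For uniqueness, the crucial point is to show that in the $\val$ description of $\ImmSucc_T(\psi(f))$, any structural embedding $\psi$ is forced to choose the auxiliary parameter $g' \in S'(n)$ with $\psi(x) \in \Succ_{T_0}(\psi(f) \conc g')$ to be precisely $\psi'(g)$. The structural condition on $\psi$ applied to $x$, $f$, and the prefixes of $f$ forces $g'(\ell_{n_2}, \ldots, \ell_{n_{d-1}}) = g(n_2, \ldots, n_{d-1})$ on every decreasing tuple in $I^n_{<\omega}$; combined with the inductive uniqueness of $\psi'$ together with its level-wise injectivity, this identifies $g' = \psi'(g)$, and the singleton condition fixes the final value at $\ell_n$. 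The main obstacle is precisely this coordination step: one must leverage the uniqueness of $\psi'$ on the shifted signature to extract the correct branching parameter in the valuation-tree construction for $T$, and the careful bookkeeping across the shifted levels $\ell_n$ must be maintained throughout.
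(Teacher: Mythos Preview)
Your approach is essentially identical to the paper's: induction on $k$, using the inductively obtained structural embedding $\psi' \colon T_1({<}k-1) \to S' = \val(S_1,\ldots,S_{k-1})$ to define $\psi$ level by level via exactly the rule you describe (the paper packages the lower levels of $\psi$ into a second invocation of the induction hypothesis rather than rebuilding them, but this is purely cosmetic).

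One small correction to your uniqueness paragraph: knowing that $g'$ and $\psi'(g)$ agree on all $L$-tuples, together with mere \emph{injectivity} of $\psi'$, does not yet force $g' = \psi'(g)$, since a priori $g' \in S'(n)$ need not lie in the image of $\psi'$. The clean fix is to carry \emph{bijectivity} of the structural embedding as part of the induction hypothesis; this follows from your construction, since each pair $(g',v)$ with $g' \in S'(n)$ and $v \in \sigma_1$ is hit, and by the inductive bijectivity $|S'(n)| = |T_1(n)|$. Once $\psi'$ is surjective, $g' = \psi'(\tilde g)$ for some $\tilde g$, and then your $L$-tuple argument gives $\tilde g = g$. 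The paper's own proof elides this point entirely, simply asserting that ``the extension was defined in the unique possible way,'' so you are already being more careful than the original.
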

\begin{proof}
  For $k\in \omega$ we use induction on $k$.
  Cases $k=0$ and $k=1$ are simple.
  Assume now that the induction hypothesis holds for some $k > 0$ and let $T = \val(S_0,\ldots,S_k)$ have height $k+1$. Put $T' = \val(S_1,\ldots,S_k)$ (it has height $k$).
  By the induction hypothesis there exists 
  a unique structural embedding $f \colon T_0({<}k) \to T({<}k)$ and a unique structural embedding $f' \colon T_1({<}k) \to T'$.

  We will now extend $f$ to $T_0(k)$. Fix $u\in T_0(k)$ and denote by $e \colon k+1 \to \omega$ the increasing enumeration of $L_{T_0}(T)$. Let $v$ be the predecessor of $u$ on level $k-1$, let $w\in T_1(k-1)$ be the unique node such that $u\in v\conc w$ and denote $x=u(k-1)$. Then there is a unique $u' \in \ImmSucc_{S_0}(f(v)) \cap \Succ_{T_0}(f(v)\conc f'(w))$ with $u'(e(k-1)) = x$. Put $f(u)=u'$.
  It is easy to check that the extended map is a structural embedding $T_0({<}k+1) \to T$
  and that the extension was defined in the unique possible way.
  
  If $k = \omega$, we have proved that there are structural embeddings
  $f_i \colon T_0({<}i) \to T({<}i)$ for each $i \in \omega$.
  Since these structural embeddings are unique, we get $f_i \subset f_j$ for $i < j$,
  and $f = \bigcup\{f_i : i \in \omega \}$ is the desired structural embedding.
  On the other hand, if $g \colon T_0 \to T$ is a structural embedding
  then for each $i \in \omega$ the restriction $g \restriction  T_0({<}i): T_0({<}i) \to T({<}i)$ is a structural embedding
  and thus has to be equal to $f_i$, 
  consequently $g = f$.
\end{proof}
\begin{corollary}
For every signature $\sigma$, $k\in \omega+1$ and a valuation tree $\val(S_i : i\in k)$, the number of nodes of $\val(S_i : i\in k)$ depends only on $k$ and $\sigma$.\qed
\end{corollary}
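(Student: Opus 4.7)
The plan is to leverage Lemma \ref{obs:isom} directly. By that lemma, for the valuation tree $T = \val(S_i : i \in k)$ there is a unique structural embedding $\psi : T_0({<}k) \to T$. I would then argue that $\psi$ is in fact a bijection, which immediately yields $\lvert T \rvert = \lvert T_0({<}k) \rvert$. Since $T_0({<}k)$ is determined entirely by $\sigma$ and $k$, the cardinality depends only on $\sigma$ and $k$ as required.

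To show $\psi$ is surjective, I would induct on $k$, using the explicit level-by-level construction of $\psi$ given in the proof of Lemma \ref{obs:isom}. At the inductive step, every node $u'$ on the top level of $\val(S_i : i \in k+1)$ lies, by Definition \ref{def:valuation_tree}(2), in $\ImmSucc_{S_0}(f) \cap \Succ_{T_0}(f\conc g)$ for some $f$ on the top level of $\val(S_i : i \in k)$ and some $g$ on the top level of $\val(S_1,\ldots,S_k)$. By the inductive hypothesis applied to $\val(S_i : i \in k)$ and to $\val(S_1,\ldots,S_k)$, we can write $f = \psi(v)$ and $g = \psi'(w)$ for unique $v \in T_0(k-1)$ and $w \in T_1(k-1)$. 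The value $u'(e(k-1))$ (in the notation of the proof of Lemma \ref{obs:isom}) then pins down a unique $u \in T_0(k)$ extending $v$ by $w$ with $\psi(u) = u'$. The case $k = \omega$ is immediate since $\psi = \bigcup_i \psi_i$.

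A slightly more elementary alternative would be to count directly by induction on $k$: at the inductive step, each node $f$ at level $j$ of $\val(S_i : i \in k)$ has exactly $\lvert S'(j) \rvert \cdot \sigma_1$ immediate successors (where $S' = \val(S_1,\ldots,S_{k-1})$). Indeed, the strong subtree property ensures $\lvert \ImmSucc_{S_0}(f) \cap \Succ_{T_0}(f \conc g) \rvert = \sigma_1$ for each $g \in S'(j)$, and the sets obtained for distinct $g$ are pairwise disjoint because they project onto disjoint subsets of $\ImmSucc_{T_0}(f)$. The inductive hypothesis, applied to the signature $\sigma^{(1)}$, gives that $\lvert S'(j) \rvert$ depends only on $\sigma^{(1)}$ and $j$, from which the total count of $\val(S_i : i \in k)$ depends only on $\sigma$ and $k$.

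The main obstacle in either approach is essentially bookkeeping: in the first, one must verify surjectivity carefully by tracking how the recursive construction in Lemma \ref{obs:isom} exhausts each level; in the second, one must justify the branching count by appealing to the strong subtree axiom and to the disjointness of the groups $f \conc g$. Both approaches dispatch the $k = \omega$ case trivially via the ascending union $\val(S_i : i \in \omega) = \bigcup_{j \in \omega} \val(S_i : i \in j)$ noted in Definition \ref{def:valuation_tree}.
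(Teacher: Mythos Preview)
Your proposal is correct and matches the paper's approach: the paper states the corollary with no proof beyond a \qed, treating it as immediate from Lemma~\ref{obs:isom}, and your first approach spells out precisely the intended reasoning (that the structural embedding is in fact a bijection). The surjectivity argument you sketch is exactly the missing detail one would supply, and your second, counting-based alternative is a valid independent route that the paper does not pursue.
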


\section{Big Ramsey degrees for $L$-hypergraphs}\label{sec:lhypergraphs}

Most of this section will be spent proving the following proposition which will be the key ingredient in the proof of Theorem~\ref{thm:structures}.
\begin{prop}\label{prop:main}
Let $L$ be a relational language which consists of no unary relations and finitely many relations of every arity, and let $\str H$ be a countable $L$-hypergraph universal for all countable $L$-hypergraphs. Then $\str H$ has finite big Ramsey degrees.
\end{prop}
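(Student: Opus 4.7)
The plan is to prove the proposition for the canonical $L$-hypergraph $\str G^\sigma$ built on $T_0$ (Definition~\ref{defn:t0_structure}) for an appropriate signature $\sigma$, and to transfer the result to $\str H$ by bi-embeddability. Choose $\sigma$ so that, after a relabelling of relation symbols, $\str G^\sigma$ has language $L$; this is possible since $L$ has only finitely many relations of each arity and no unary relations. A standard back-and-forth shows that $\str G^\sigma$ is universal for countable $L$-hypergraphs, hence bi-embeddable with $\str H$, and since $L$-hypergraphs have no algebraicity this bi-embeddability preserves the property of having finite big Ramsey degrees. The hypothesis of finitely many relations of each arity also ensures that each $T_i$, and hence the vector tree $\str T^\sigma\restriction_k$, is finitely branching for every finite $k$, as required by Theorem~\ref{thm:Milliken}.

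Now fix a finite substructure $\str A$ with $k$ vertices and a finite colouring $\chi$ of ${\str G^\sigma\choose \str A}$. An embedding $e\colon\str A\to\str G^\sigma$ lands on $k$ strictly increasing levels of $T_0$; closing the image under meets and using Observation~\ref{obs-subtree} yields a strong subtree of $T_0$ with the same level set, which by Definition~\ref{def:valuation_tree} lifts to a strong vector subtree $\str E(e)\in\Str_{h}(\str T^\sigma\restriction_k)$ of some finite height $h$. Define the \emph{shape} of $e$ to be the isomorphism type of $\str E(e)$ together with the valuation values at its nodes recorded by Definition~\ref{defn:t0_structure}. Because $L$ has no unary relations, has finitely many relations of each arity, and because $\str A$ has only $k$ vertices (so only relations of arity at most $k$ can occur in $\str A$), the set of possible shapes is finite. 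For each shape $\tau$, apply Theorem~\ref{thm:Milliken} to $\str T^\sigma\restriction_k$ with the colouring that assigns to each $\str E\in\Str_h(\str T^\sigma\restriction_k)$ of shape $\tau$ the $\chi$-value of the corresponding embedding (and some fixed default value otherwise); iterating over the finitely many shapes and nesting the resulting strong vector subtrees produces a single $\str S\in\Str_\omega(\str T^\sigma\restriction_k)$ on which $\chi$ depends only on the shape. By Lemma~\ref{obs:isom} applied with $k=\omega$, the valuation tree $\val(\str S)$ admits a unique structural embedding of $T_0$, which by Observation~\ref{obs:structural_emb} is an embedding of $\str G^\sigma$ into the substructure induced on $\val(\str S)$. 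On this embedded copy of $\str G^\sigma$ the colouring $\chi$ takes at most one value per shape, providing a finite upper bound on the big Ramsey degree of $\str A$.

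The main obstacle is setting up the shape correspondence rigorously: one must verify that every relational fact about $\str A$ is encoded in the valuation data recorded on the nodes of $\str E(e)$, so that two embeddings with the same shape necessarily induce isomorphic labelled images in $\str G^\sigma$; and conversely that every embedding into $\val(\str S)$ obtained via the structural embedding of Lemma~\ref{obs:isom} is of the form described by some shape realized as a strong vector subtree of $\str S$ on which Milliken's theorem has made $\chi$ constant. The no-unary-relations hypothesis enters here because the root of $T_0$ carries no valuation data to be distinguished by, and the finitely-many-relations-per-arity hypothesis is needed both to keep each $T_i$ finitely branching and to keep the shape count finite; handling relations of arity up to $k$ in $\str A$ is precisely what the vector-tree encoding is designed to accommodate.
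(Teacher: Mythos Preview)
There is a genuine gap at the heart of your envelope construction. You write that closing the image of $e$ under meets and extending via Observation~\ref{obs-subtree} to a strong subtree of $T_0$ ``by Definition~\ref{def:valuation_tree} lifts to a strong vector subtree $\str E(e)\in\Str_h(\str T^\sigma\restriction_k)$''. Definition~\ref{def:valuation_tree} does no such thing: it constructs a valuation tree \emph{from} a given strong vector subtree, not conversely. A generic strong subtree of $T_0$ is not a valuation tree; being one is a restrictive branching condition (at each level the immediate successors of a node are governed by specific nodes of the auxiliary trees $T_1,T_2,\ldots$), and no lift is on offer. Consequently you have neither shown that $e[\str A]$ sits inside any valuation tree, nor that the height $h$ of such an envelope is bounded in terms of $k$ alone; without a uniform bound your set of ``shapes'' is not finite and Milliken cannot be applied at a single fixed height.

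The paper's proof confronts exactly this difficulty with two lemmas you have bypassed. Lemma~\ref{lem:enveloping} builds a special \emph{$k$-enveloping} embedding $\varphi\colon\str H\to\str G$, carefully interleaving ``branching'' levels so that meets of slices are controlled; Lemma~\ref{lem:envelopes} then shows that for any such $\varphi$ every $k$-element subset of $\varphi[\str H]$ is contained in a valuation tree of height at most $R(k)$, a bound depending only on $k$. Milliken is then applied to $\str T\restriction_{R(k)}$ (not $\str T\restriction_k$), and the oligochromatic copy is obtained as $\theta\circ\psi\circ\varphi[\str H]$, where $\psi$ is the structural embedding into the monochromatic valuation subtree. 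Your outline is correct in spirit---encode copies of $\str A$ by finite vector-tree data and invoke Milliken---but the substantive work lies precisely in manufacturing a copy of $\str H$ inside $\str G$ whose finite subsets admit bounded-height valuation-tree envelopes, and that step is missing.
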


For this section, fix such $L$ and $\str H$. Assume that $H = \omega$. Let $n_i$ be the number of relations of arity $i$ and assume that $L = \{ \rel{}{i,j} : i\in \omega, 1\leq j \leq n_i \}$ where $\rel{}{i,j}$ has arity $i$. Let $\mu$ be the largest integer for which $n_\mu > 0$, or $\omega$ if such $\mu$ does not exist. Let $\sigma$ be the signature defined by 
$$\sigma_i = \begin{cases}
  n_{i+1}+2 &\text{ if } i < \mu\\
  1 &\text{ otherwise}.
\end{cases}$$
and let $\str T = \str T^\sigma$. Let $\str G$ be the $L$-hypergraph $\str G^\sigma$ on $T_0$ given by Definition~\ref{defn:t0_structure}, and recall that by Observation~\ref{obs:structural_emb}, every structural embedding $T_0\to T_0$ induces an embedding $\str G\to \str G$.

Our aim is to embed $\str H$ into $\str G$, transfer colourings of substructures of $\str H$ into colourings of vector subtrees of $\str T$ and use Milliken's theorem to obtain the desired Ramsey result. However, not all embeddings $\str H\to\str G$ are created equal, and in order to construct one with the properties we need, we first need to introduce some terminology.
\begin{definition}
Let $f$ be a valuation function of level $n$. Given $\bar{x} = (x_0,\ldots,x_{m-1})$ with $n >  x_0 > \cdots > x_{m-1} \geq 0$, we denote by $f^{\bar{x}}$ the valuation function of level $x_{m-1}$ defined by $f^{\bar{x}}(\bar{y}) = f(\bar{x}\conc \bar{y})$ and call it the \emph{$\bar{x}$-slice of $f$}. An \emph{$m$-slice of $f$} is an arbitrary $\bar{x}$-slice of $f$ where $\lvert\bar{x}\rvert = m$. If $m=1$ and $\bar{x} = (x_0)$ we may write $f^{x_0}$ for $f^{\bar{x}}$.
\end{definition}
In particular, a $0$-slice of $f$ is just $f$. Note that the elements of $T_i$ are precisely the $i$-slices of elements of $T_0$.

\begin{definition}\label{defn:enveloping}
Let $k\in \omega$, let $\varphi\colon \omega\to T_0$ be an embedding $\str H\to \str G$ and denote by $R = \varphi[\omega]$ its range. We say that $\varphi$ is \emph{$k$-enveloping} if there is a partition $\omega = O\cup B$ with $O\cap B=\emptyset$ such that the following holds for every $f \in R$, for every $m<k$ and for every $\bar{x} = (x_0,\ldots,x_{m-1})$:
\begin{enumerate}
\item\label{defn:enveloping:1} If $f^{\bar{x}}$ is not constant zero then $x_i\in O$ for every $i$, and
\item\label{defn:enveloping:2} if $g\in R$ or $g$ is constant zero, $g'$ is an $m$-slice of $g$, and $f^{\bar{x}}$ and $g'$ are incomparable then $\card{f^{\bar{x}}\wedge g'}\in B$.
\end{enumerate}
\end{definition}
The letter $O$ stands for the \emph{original} levels, while $B$ is the set of \emph{branching} levels.

\begin{observation}A $k$-enveloping embedding is also $k'$-enveloping for every $k'<k$.\qed\end{observation}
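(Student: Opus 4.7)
The plan is immediate from the definition. The $k$-enveloping property quantifies conditions (1) and (2) of Definition~\ref{defn:enveloping} over all $m < k$, while the $k'$-enveloping property only requires them for $m < k'$. Since $k' < k$ implies that every $m < k'$ also satisfies $m < k$, the very same partition $\omega = O \cup B$ witnessing that $\varphi$ is $k$-enveloping will automatically witness that $\varphi$ is $k'$-enveloping; no new data needs to be constructed and no existence argument is required.

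Concretely, I would simply re-use the partition $O \cup B$ from the $k$-enveloping hypothesis; then, given an arbitrary $f \in R$, an arbitrary $m < k'$, and an arbitrary tuple $\bar{x} = (x_0,\ldots,x_{m-1})$, the conditions to check are precisely instances of conditions (1) and (2) of Definition~\ref{defn:enveloping} at the index $m$, and these are granted by the $k$-enveloping hypothesis since $m < k' < k$. There is no real obstacle here: the observation is recorded only to allow later arguments to quietly pass from a $k$-enveloping embedding to a $k'$-enveloping one, and its proof amounts to nothing more than weakening the range of a quantifier.
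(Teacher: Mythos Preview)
Your proposal is correct and matches the paper's treatment: the observation is marked with a bare \qed{} in the paper, i.e.\ it is left as immediate from Definition~\ref{defn:enveloping}, and your argument (reuse the same partition $O\cup B$ and note that the quantifier range $m<k'$ is contained in $m<k$) is exactly the trivial verification the paper omits.
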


The intuition behind Definition~\ref{defn:enveloping} is that, when embedding $\str H\to \str G$, we ``allocate'' several levels before each image of a vertex of $\str H$ for branching, so that we can have strong control over what happens on meets of slices of vertices from the image of $\str H$. Often (e.g. when there is an upper bound on the number of relations of every arity) it is possible to get an embedding which is $k$-enveloping for every $k$. However, when the number of relations increases with arity, we were so far unable to get such a uniform embedding (and we conjecture that it does not exist, see Conjecture~\ref{conj:no_enveloping}).

\begin{lemma}\label{lem:enveloping}
For every $k\in \omega$, there is a $k$-enveloping embedding $\varphi\colon \str H\to \str G$.
\end{lemma}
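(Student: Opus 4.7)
My plan is to construct $\varphi$ and the partition $\omega = O \cup B$ together by recursion on the enumeration of $\str H$. I fix in advance an increasing sequence $\ell_0 < \ell_1 < \cdots$ in $\omega$, set $O = \{\ell_i : i \in \omega\}$ and $B = \omega \setminus O$, and arrange the gaps $\ell_i - \ell_{i-1}$ to be large enough (depending on $i$, $k$, and the arities in $L$) to accommodate the branching work performed at step $i$. Each $\varphi(v_i)$ will be a valuation function of level $\ell_i$.

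To define the value $\varphi(v_i)(\bar y)$ on each $\bar y \in I^{\ell_i}_{<\omega}$, I split the tuples into three regimes: \emph{structural values} for $\bar y \subset O$ with $|\bar y| \leq \mu-1$, set equal to the index of the relation (if any) in $\str H$ among $\{v_i\} \cup \{v_j : \ell_j \in \bar y\}$, and $0$ otherwise, which forces $\varphi$ to be an embedding; \emph{forced zeros} for $\bar y$ of length $\geq 2$ one of whose first $\min(|\bar y|-1, k-1)$ coordinates lies in $B$, which is exactly what Definition~\ref{defn:enveloping}(\ref{defn:enveloping:1}) imposes; and \emph{free values} for everything else. The free regime includes all singletons $(b)$ with $b \in B$ and, more generally, tuples of length $\leq \mu - 1$ whose first relevant coordinates are in $O$ but which contain a later $B$-entry; their values affect neither the embedding nor condition~(\ref{defn:enveloping:1}), so they are available to enforce~(\ref{defn:enveloping:2}).

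The delicate step is to use these free values to satisfy condition~(\ref{defn:enveloping:2}). This condition is only nontrivial for slice depths $m < \mu - 1$: deeper slices are automatically constant zero (since $\sigma_\ell = 1$ for $\ell \geq \mu$), and constant-zero slices are pairwise comparable. For each pair $(\varphi(v_i)^{\bar x}, g^{\bar y})$ with $\bar x \subset O$, $g \in R \cup \{\text{constant-zero nodes}\}$, $\bar y \subset O$ when $g \in R$, and $m = |\bar x| = |\bar y| < \min(k, \mu-1)$, if the slices end up incomparable we must push their meet into $B$. I will ensure this by selecting, for each such pair, a fresh $B$-level $b$ strictly smaller than any $O$-level at which the two slices could possibly first disagree, and setting a free value at some tuple extending $\bar x$ by $b$ (or by a $B$-tuple beginning at $b$) that differs from the corresponding value of $g^{\bar y}$. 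Crucially, both $0$ and $\sigma_{|\bar y|}-1$ represent ``no relation'' in $\str G$, so this disagreement can always be realized using the two non-relation values without disturbing the structural assignment.

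The \textbf{main obstacle} is coordinating all these branching choices consistently. At step~$i$, only finitely many slice pairs need treatment (a bound depending on $i$, $k$, and $\mu$), so provided the gap between $\ell_{i-1}$ and $\ell_i$ contains sufficiently many distinct $B$-levels a greedy assignment works. Careful inductive bookkeeping is needed to confirm that each branching introduced for $\varphi(v_i)$ preserves the enveloping conditions already established for the pairs of earlier $\varphi(v_j)$s, and that the resulting $\varphi$ is simultaneously a $k$-enveloping embedding $\str H \to \str G$.
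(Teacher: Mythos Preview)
Your overall framework—partition $\omega=O\cup B$, put $\varphi(v_i)$ at level $\ell_i\in O$, use structural values on $O$-tuples to make $\varphi$ an embedding, zero out the tuples forced by condition~(\ref{defn:enveloping:1}), and exploit the remaining free values to arrange condition~(\ref{defn:enveloping:2})—is sound and matches the paper's set-up. The gap is in how you propose to assign the free values.

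You write that at step $i$ ``only finitely many slice pairs need treatment, so provided the gap between $\ell_{i-1}$ and $\ell_i$ contains sufficiently many distinct $B$-levels a greedy assignment works.'' But the $B$-level $b$ you need for a pair $(\varphi(v_i)^{\bar x},\,g^{\bar y})$ must lie below the first $O$-level of disagreement, and since $\bar x$ ends at some $\ell_p$ with $p<i$, this forces $b<\ell_p$; the interval $(\ell_{i-1},\ell_i)$ is irrelevant. Worse, the supply of $B$-levels below a fixed $\ell_p$ is finite and is shared by \emph{all} future steps: as $i\to\infty$ you face unboundedly many pairs whose first structural disagreement is at $\ell_p$, so ``a fresh $B$-level per pair'' is impossible. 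Even dropping freshness, the real requirement is that whenever two $m$-slices agree on all values with top coordinate below $\ell_p$ they must also agree on the structural value at $\ell_p$; equivalently, the free values below $\ell_p$ must \emph{determine} the structural value at $\ell_p$. A greedy per-pair assignment does not enforce this: two earlier images with different structural behaviour at $\ell_p$ may already occupy both non-relation patterns on $B\cap[0,\ell_p)$, leaving no consistent choice at step $i$.

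The paper avoids recursion entirely and instead builds the encoding into the definition of $B$. It introduces formal symbols $\branch{r}{y_0,\dots,y_{n-1}}$, one for each potential relation $r$ on each tuple, interleaves them with $\omega$ to form the level set, and sets
\[
\varphi(i)\bigl(\psi(x_0),\dots,\psi(x_{m-1}),\psi(\branch{r}{y_0,\dots,y_{n-1}})\bigr)=\sigma_{m+1}-1
\]
exactly when the corresponding relation holds in $\str H$. Thus the free value at the $B$-level $\psi(\branch{r}{\dots})$ directly records the structural value that will appear at the higher $O$-level, so two slices that agree on all $B$-levels below some $\ell_p$ automatically agree at $\ell_p$. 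Your recursive scheme can be repaired, but only by adopting this same idea: the free values must be a \emph{function of the relations in $\str H$}, not chosen pair-by-pair.
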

\begin{proof}
Recall that the vertex set of $H$ is $\omega$. For every $m\geq 1$, put $$S_m = \{1,\ldots,\max_{i \in k}\{\sigma_{i+m}\}-2\},$$ define
$$J_0 =  \{\branch{s}{x_0,\ldots,x_{m-1}} : s\in S_m , x_0 > \cdots > x_{m-1} \geq 0, m\geq 1\},$$
where we treat $\branch{s}{x_0,\ldots,x_{m-1}}$ as a formal expression, and put
$$J = \omega \cup J_0.$$
Define a linear order $\triangleleft$ on $J$, putting $i \triangleleft j$ if and only if $i < j$, putting $i\triangleleft \branch{s}{x_0,\ldots,x_{m-1}}$ if and only if $i < x_0$ and putting $\branch{s}{\bar{x}} \triangleleft \branch{t}{\bar{y}}$ if and only if either $\bar{x}$ is lexicographically smaller than $\bar{y}$, or $\bar{x} = \bar{y}$ and $s < t$.

Let $\psi\colon (J,\triangleleft) \to (\omega,<)$ be the increasing function such that $\psi(w) = \lvert \{w'\in J : w' \triangleleft w\}\rvert$ (that is, the $\psi(w)$-th element of $J$ is $w$).
Put $O = \psi[\omega]$ and $B = \psi[J_0]$ and note that $O\cap B = \emptyset$ and $O\cup B = \omega$.

Let $\varphi \colon \str{H}\to \str{G}$ be the map where $\varphi(i)\in T_0$ is the valuation function with $\lvert\varphi(i)\rvert = \psi(i)$ and all entries equal to $0$ except for the following two cases:
\begin{enumerate}
  \item\label{l:env:1} If $i > x_0 > \cdots > x_{m-1}$ and $\{i,x_0,\ldots,x_{m-1}\} \in \rel{H}{m+1,r}$ then we have $\varphi(i)(\psi(x_0),\ldots,\psi(x_{m-1})) = r$.
  \item If $m \in k$, $i > x_0 > \cdots > x_{m-1} > y_0 > \cdots > y_{n-1},$
    and $$\{i,x_0,\ldots,x_{m-1},y_0,\ldots,y_{n-1}\} \in \rel{H}{n+m+1,r}$$
    then we have $$\varphi(i)(\psi(x_0),\ldots,\psi(x_{m-1}),\psi(\branch{r}{y_0,\ldots,y_{n-1}}))=\sigma_{m+1}-1.$$
\end{enumerate}
Part~(\ref{l:env:1}) ensures that $\varphi$ is an embedding. We will prove that it is in fact $k$-enveloping, that is, we will verify points~(\ref{defn:enveloping:1}) and~(\ref{defn:enveloping:2}) of Definition~\ref{defn:enveloping}.

Point~(\ref{defn:enveloping:1}) follows straightforwardly from our construction of $\varphi$: The only tuples not valuated by $0$ either consist of levels from $O$ only, or the smallest level is from $B$.

To see point~(\ref{defn:enveloping:2}), suppose that $f' = f^{\bar{x}}$, $g'=g^{\bar{y}}$, $\lvert\bar{x}\rvert = \lvert\bar{y}\rvert < k$, and $f'$ and $g'$ are incomparable. This means that there is a level $n$ such that $f|_n = g'|_n$, but $f'|_{n+1} \neq g'|_{n+1}$. That is equivalent to $n$ being the least integer for which there exists a (possibly empty) decreasing sequence $\bar{z}$ such that $$f(\bar{x}\conc n \conc \bar{z})\neq g(\bar{y}\conc n \conc \bar{z}).$$

For a contradiction suppose that $n\in O$, that is, $n = \psi(n')$ for some $n'\in \omega$. Let $\bar{z} = (z_0,\ldots,z_{p-1})$ and denote $a = f(\bar{x}\conc n \conc \bar{z})$ and $b = g(\bar{y}\conc n \conc \bar{z})$. We know that $a\neq b$, and without loss of generality we can assume that $a\neq 0$. We also know that $z_0,\ldots,z_{p-2}\in O$ (by the construction of $\varphi$, only the last level of a tuple valuated by a non-zero integer can be from $B$), so $\psi^{-1}(z_i)$ is defined for every $0\leq i\leq p-2$.

First suppose that $p\geq 1$ and $z_{p-1}\in O$ or that $p=0$. If $p=0$, let $\bar{z}'$ be the empty tuple, and if $p\geq 1$, put $\bar{z}' = (\psi^{-1}(z_0),\ldots,\psi^{-1}(z_{p-1}))$. It follows from the construction that $\branch{a}{n'{} \conc \bar{z}'} \in J_0$ and
$$f(\bar{x}\conc \psi(\branch{a}{{n'} \conc \bar{z}'})) = \sigma_{\lvert\bar{x}\rvert + 1}-1\neq 0 = g(\bar{x}\conc \psi(\branch{a}{{n'} \conc \bar{z}'})),$$
a contradiction with minimality of $n$ as $\psi(\branch{a}{n'{} \conc \bar{z}'}) < \psi(n') = n$.

So $p\geq 1$ and $z_{p-1} = \psi(\branch{c}{w_0,\ldots,w_{q-1}})$. In this case put 
$$\bar{z}' = (\psi^{-1}(z_0),\ldots,\psi^{-1}(z_{p-2}),w_0,\ldots,w_{q-1}).$$
By the construction we know that
$$f(\bar{x}\conc \psi(\branch{c}{n'{} \conc \bar{z}'})) = \sigma_{\lvert\bar{x}\rvert+1} - 1 \neq 0 = g(\bar{x}\conc \psi(\branch{c}{n'{} \conc \bar{z}'})),$$ hence we again get a contradiction with minimality of $n$, which verifies that point~(\ref{defn:enveloping:2}) is satisfied and thus $\varphi$ is indeed $k$-enveloping.
\end{proof}

\begin{remark}
Note that if there is an absolute bound $N$ on the number of relations of any arity then there exists an embedding which is $k$-enveloping for every $k\in \omega$. Indeed, one can pretend in the above proof that $k = \omega$ and use the fact that we always have $\max_{i \in \omega}\{\sigma_{i+m}\} \leq N$ for any $m\in \omega$. When there is no such bound, we have been unable to produce such an embedding and we conjecture that actually there is no such embedding (in fact, we conjecture a stronger statement, see Conjecture~\ref{conj:no_enveloping}).
\end{remark}

\subsection{Envelopes}
As was noted earlier, our goal is to transfer colourings of substructures of a nice copy of $\str H$ in $\str G$ to colourings of valuation subtrees of $T_0$. 
\begin{definition}
Given $S\subseteq T_0$ and a valuation subtree $T\subseteq T_0$, we say that $T$ is an \emph{envelope} of $S$ if $S\subseteq T$. 
\end{definition}

Having an enveloping embedding allows us to envelope finite subsets of its range in bounded-height valuation trees:

\begin{lemma}\label{lem:envelopes}
For every $k \in \omega$ there exists $R(k) \in \omega$ such that for every $k$-enveloping embedding $\varphi\colon \str H\to \str G$ and every set $S\subset H$ of size $k$ it holds that $\varphi[S]$ has an envelope of height at most $R(k)$.
\end{lemma}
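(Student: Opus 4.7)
The plan is to proceed by induction on $k$, with $R(0)=0$ (the empty envelope). For the inductive step, given $S\subset H$ with $|S|=k$ and a $k$-enveloping embedding $\varphi$, the task reduces to choosing a level set $L\subseteq \omega$ and then a strong vector subtree $(S_0,S_1,\ldots,S_{|L|-1})$ of $\str{T}\restriction_{|L|}$ with common level set $L$ such that $\varphi[S]\subseteq \val(S_0,\ldots,S_{|L|-1})$; the height $|L|$ will then witness $R(k)$.

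First I would analyze the subtree generated by $\varphi[S]$ in $T_0$. By clause~(\ref{defn:enveloping:2}) of Definition~\ref{defn:enveloping} applied at depth $0$, the meet closure of $\varphi[S]$ has all its meet levels in $B$, and it contributes at most $2k-1$ distinct levels. Unfolding the recursive definition of $\val$ (Definition~\ref{def:valuation_tree}) tells me what more is required: if $f\in\varphi[S]$ lies at $S_0$-level $\ell+1$, then the $1$-slice $f^{(m)}$ (where $m$ is the $\ell$-th element of $L$) must lie in $\val(S_1,\ldots,S_{|L|-1})$ at $S_1$-level $\ell$. So I would collect all such required $1$-slices and recursively envelope them as a family inside $T_1$, using the $k$-enveloping property at depth $1$ (meets of incomparable $1$-slices are again in $B$); this produces additional levels to throw into $L$. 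The same is then done at depths $2,\ldots,k-1$ for the required $d$-slices in $T_d$. The recursion stabilizes because a set of $k$ vertices only carries relations of arity at most $k$ so slices of depth $\geq k$ are identically zero, and also because once $d\geq \mu$ the tree $T_d$ is a chain and nothing new can appear.

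Once $L$ is fixed, I would complete each $T_d$ to a strong subtree $S_d$ with level set $L$ containing the collected $d$-slices: since at each level the strong subtree condition only asks for one representative in each immediate $T_d$-successor cone, the required slices can always be freely extended into a strong subtree. A straightforward induction along the recursive clauses of Definition~\ref{def:valuation_tree} then verifies $\varphi[S]\subseteq \val(S_0,\ldots,S_{|L|-1})$, using Observation~\ref{obs:structural_emb} and the fact that at each recursive step the chosen $S_d$ already contains the $d$-slice prescribed by an element of $\varphi[S]$ sitting above.

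The main obstacle is bounding $|L|$ by a finite function $R(k)$ depending only on $k$ and $\sigma$. The delicate point is that the recursive enveloping of slices at depth~$d$ could, a priori, introduce genuinely new levels at each stage, and these stages might cascade. Controlling this is where the full strength of the $k$-enveloping property is used: the partition $\omega=O\cup B$ was set up in Lemma~\ref{lem:enveloping} so that for every $m<k$ and every $\bar{x}$, the meet of $f^{\bar{x}}$ with any competing slice or the constantly zero slice falls in $B$, which itself is indexed by formal symbols $\branch{s}{\bar{y}}$ whose combinatorics reflect the finitely many shapes that can arise from $k$ vertices. Because of this, the recursive slice-enveloping does not inflate $L$ at each step but rather keeps hitting levels already present; a careful bookkeeping of slice shapes versus branching indices lets me extract a bound $R(k)$ that is a function of $k$ alone. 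Making this bookkeeping precise is what I expect the proof to spend most of its effort on.
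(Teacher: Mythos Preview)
Your overall architecture matches the paper's: iterate the two operations ``close under meets (with the zero function)'' and ``take all $1$-slices at the levels just produced'', collect the resulting level set $L$, extend to a level-compatible strong vector subtree, and verify containment by induction along the recursion in Definition~\ref{def:valuation_tree}. The paper does exactly this, packaging the iteration as sets $E_i^0\subseteq E_i^1\subseteq E_i^2$ and proving a list of invariants.

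There is, however, a genuine gap in your termination argument. You write that ``a set of $k$ vertices only carries relations of arity at most $k$ so slices of depth $\geq k$ are identically zero.'' This is not the correct reason and, as stated, is false: for $v\in S$ the valuation function $\varphi(v)$ encodes the relations of $v$ with \emph{all} of $\str H$, not just with $S$, so an $m$-slice of $\varphi(v)$ at $m$ arbitrary levels can perfectly well be nonzero for any $m$. What actually forces termination is the $k$-enveloping hypothesis: clause~(\ref{defn:enveloping:1}) says that a slice $f^{\bar x}$ is nonzero only if every coordinate of $\bar x$ lies in $O$, and clause~(\ref{defn:enveloping:2}) guarantees that the meet-closure step only introduces new levels from $B$. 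Hence the only $O$-levels that ever appear in the construction are the $k$ levels of $\varphi[S]$ themselves, and each pass to depth $i$ uses up one more of them; after at most $k$ iterations every slice under consideration is constant zero. This is the paper's property~(\ref{env:3}) (``at most $\max(0,k-i)$ levels with non-zero members of $E_i^1$''), and it is the heart of the bound.

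Two smaller points. First, your claim that the slice-enveloping ``keeps hitting levels already present'' is not right either: new $B$-levels \emph{are} added at each depth, but their number is controlled by the doubling estimate $|L_{T_i}(E_i^2)|\leq 2|L_{T_i}(E_i^1)|-1$, which combined with the $k$-step termination gives the bound $R(k)$. Second, you should not appeal to the particular partition $O\cup B$ built in Lemma~\ref{lem:enveloping}; the present lemma is stated for an arbitrary $k$-enveloping embedding, so you may only use Definition~\ref{defn:enveloping}.
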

\begin{proof}
Let $O,B\subseteq \omega$ be sets witnessing that $\varphi$ is $k$-enveloping. Given an $i$-slice $f^{\bar{x}}$, we will call it \emph{original} if $\bar{x}\in O^i$.

Fix a set $S$ of $k$ elements from $H$. Put $E_0^0 = \varphi[S]$, $E_0^1 = E_0^0 \cup \{z\}$, where $z$ is the constant zero valuation function of level $\max\{\card{f} : f\in E_0^0\}$, and $E_0^2 = \{ f \wedge_{T_0} g : f,g\in E_0^1 \}\subset T_0$. Define by induction sets $E_i^0$, $E_i^1$ and $E_i^2$ for $1\leq i < R(k)$, where $R(k)$ will be defined later, as follows (recall that $f^{\card{g}}$ is the $\bar{x}$-slice of $f$ for $\bar{x}=(\card{g})$):
\begin{enumerate}
  \item $E_i^0 = \{ f^{\card{g}} : f,g\in E_{i-1}^2, \card{g} < \card{f}\}$,
  \item $E_i^1 = E_i^0 \cup \{z\}$, where $z$ is the constant zero valuation function of level $\max\{\card{f} : f\in E_i^0\}$,
  \item $E_i^2 = \{ f \wedge_{T_i} g : f,g\in E_i^1 \}$.
\end{enumerate}
\begin{claim}\label{c:envelopes}
The following properties hold for every $0\leq i < R(k)$:
\begin{enumerate}
  \item\label{env:1} $E_i^2\supseteq E_i^1\supseteq E_i^0$ and $E_i^2$ is a subtree of $T_i$,
  \item\label{env:2} each element of $E_i^1$ is either constant zero or an original $i$-slice of a member of $\varphi[S]$ and each element of $E_i^2$ is a restriction of an element of $E_i^1$,
  \item\label{env:3} there are at most $\max(0,k - i)$ levels with non-zero members of $E_i^1$,
  \item\label{env:4} for $i>0$, it holds that $L_{T_i}(E_i^2) \setminus L_{T_{i-1}}(E_{i-1}^2) \subseteq B$,
  \item\label{env:5} if $\ell \in L_{T_i}(E_i^2) \cap O$ then there is $f\in \varphi[S]$ with $\lvert f\rvert = \ell$.
  \item\label{env:6} for $i>0$ we have that $L_{T_{i-1}}(E_{i-1}^2) \setminus L_{T_i}(E_i^2) = \{\max(L_{T_{i-1}}(E_{i-1}^2))\}$,
  \item\label{env:7} for $i>0$, it holds that $\max(L_{T_i}(E_i^2)) < \max(L_{T_{i-1}}(E_{i-1}^2))$.
\end{enumerate}
\end{claim}
We will proceed by induction on $i$. For $i=0$ this is immediate. So $i > 0$ and we know that all properties hold for $i-1$. Property~(\ref{env:1}) is straightforward from the definition. We know that (\ref{env:2}) holds for $i-1$, so $E_i^0$ consists of slices of members of $\varphi[S]$ and constant zero functions, so this is true also for $E_i^1$, and in constructing $E_i^2$ we are only adding restrictions of members of $E_i^1$.

Note that if $i>k$ then, by~(\ref{env:3}), we know that we only had constant zero functions in $E_{i-1}^1$ and consequently also in $E_{i-1}^2$ and $E_i^1$. So it suffices to prove~(\ref{env:3}) for $i\leq k$. By~(\ref{env:5}) for $i-1$ we have that the only levels from $O$ we can slice with correspond to members of $\varphi[S]$ and by (\ref{env:2}) we know that the only non-zero members of $E_i^1$ are original $i$-slices of members of $\varphi[S]$. To get an $i$-slice, we need $i+1$ members of $\varphi[S]$ (one to slice and the other $i$ for a decreasing sequence of levels from $O$ to slice with), hence there are only $k-i$ possible last elements of the decreasing sequence. This verifies~(\ref{env:3}).

To see~(\ref{env:4}), note that we only add new levels compared to $E_{i-1}^2$ when taking meets in the construction of $E_i^2$. This means that we can assume that $i < k$ as otherwise all elements of $E_i^1$ are comparable. By~(\ref{env:2}) we know that each member of $E_i^1$ is either constant zero or an original $i$-slice of a member of $\varphi[S]$, and since $\varphi$ is $k$-enveloping and $i < k$ it follows that their meets happen on levels from $B$.

Property~(\ref{env:5}) is immediate from~(\ref{env:4}) and the induction hypothesis. Property~(\ref{env:6}) is also easy, in the construction of $E_i^0$ we only lose the highest level. Having~(\ref{env:6}),~(\ref{env:7}) is again easy, because we only add new levels as meets of functions from existing levels. This finishes the proof of Claim~\ref{c:envelopes}.

\medskip

Property~(\ref{env:3}) implies that each member of $E_k^1$ is constant zero, hence $E_k^2 = E_k^1$. It follows that for every $i > k$ we have $E_i^1 = E_i^2$ and all its elements are constant zeros.

Put $\mathfrak L = \bigcup_{i=0}^k L_{T_i}(E_i^2)$. Clearly, for every $i\geq 1$ it holds that $\lvert L_{T_i}(E_i^1)\rvert \leq \lvert L_{T_{i-1}}(E_{i-1}^2)\rvert$ and for every $i\geq 0$ we have $\lvert L_{T_i}(E_i^2)\rvert \leq 2\lvert L_{T_i}(E_i^1)\rvert -1$. Together with $\lvert L_{T_0}(E_0^1)\rvert \leq k$ this implies that $\lvert\mathfrak L\rvert$ is bounded from above by some $R(k)$ which is a function of $k$. In particular, $\lvert L_{T_j}(E_j^2)\rvert \leq R(k) - j$, and so $E_{R(k)-1}^2$ is a singleton set containing a constant zero function.

For every $i\in R(k)$ define $$E_i^3 = \{ f|_\ell : f\in E_i^2, \ell \in \mathfrak L, \card{f}\geq \ell \}.$$ Note that $L_{T_{i}}(E_i^3) \subseteq \mathfrak L$ and that $E_i^3$ is a subtree of $T_i$ for every $i\in R(k)$. For every $i\in R(k)$ let $S_i$ be some strong subtree of $T_i$ containing $E_i^3$ such that $L_{T_i}(S_i) = \mathfrak L$. Such trees exist by Observation~\ref{obs-subtree}.

It remains to prove that $\varphi[S] \subseteq \val(S_i : i\in R(k))$. We will prove the following stronger result:
\begin{claim}\label{c:vals}
For every $i\in R(k)$ and $\ell\in \mathfrak L$ it holds that if $f\in E_{R(k)-i-1}^3$ and $\ell\leq\card{f}$ then $f|_\ell \in\val(S_{R(k)-i-1},\allowbreak\ldots,  S_{R(k)-1})$.
\end{claim}
We will prove this statement by double induction on $i$ (outer induction) and $\ell\in \mathfrak L$ (inner induction).
For $i=0$ this is easy because $E_{R(k)-1}^3$ consists only of the root of $S_{R(k)-1}$ which is also the constant zero function of level $\min(\mathfrak L)$ and is the root of $\val(S_{R(k)-1})$.

We will now prove the statement for $i$ and $\ell$, assuming that it holds for every $j < i$ and every $\ell'\in\mathfrak L$, and also for $i$ and every $\ell'\in \mathfrak L$ such that $\ell' < \ell$. Pick an arbitrary $f\in E_{R(k)-i-1}^3$. If $\ell = \min(\mathfrak L)$ then $f|_\ell$ is the root of $S_{R(k)-i-1}$ and hence the root of $\val(S_{R(k)-i-1}, \ldots,S_{R(k)-1})$.

So $\ell > \min(\mathfrak L)$. Let $\ell'$ be the largest member of $\mathfrak L$ smaller than $\ell$. By the construction, $f|_{\ell'} \in E_{R(k)-i-1}^3$, and so $f|_{\ell'} \in \val(S_{R(k)-i-1}, \ldots,S_{R(k)-1})$ by the induction hypothesis for $i$ and $\ell'$.

If $\ell'\in B$ then $f^{\ell'}$ is the constant zero valuation function of level $\ell'$ and as such is in $E_{R(k)-i}^3$ and consequently in $\val(S_{R(k)-i}, \ldots,S_{R(k)-1})$ by the induction hypothesis. Otherwise $\ell' \in O$, but then $f^{\ell'} \in E_{R(k)-i}^0 \subseteq E_{R(k)-i}^3$ and hence also in $\val(S_{R(k)-i}, \ldots,S_{R(k)-1})$ by the induction hypothesis.

So we know that 
$$f|_{\ell'} \in \val(S_{R(k)-i-1}, \ldots,S_{R(k)-1})$$
and
$$f^{\ell'} \in \val(S_{R(k)-i}, \ldots,S_{R(k)-1}).$$ The definition of valuation tree then gives that $f|_\ell \in\val(S_{R(k)-i-1}, \ldots,S_{R(k)-1})$ which concludes the proof of Claim~\ref{c:vals}.

\medskip

As a special case of Claim~\ref{c:vals} we get that $\varphi[S] = E_0^0\subseteq E_0^3 \subseteq \val(S_i : i\in R(k))$. This means that there indeed is a valuation tree of height at most $R(k)$ which contains $\varphi[S]$.
\end{proof}

Above we conjectured that if the sequence $\sigma$ is unbounded, one cannot construct an embedding $\str H\to \str G$ which would be $k$-enveloping for every $k$. In fact, we conjecture that in such a setting no single embedding $\str H \to \str G$ can have bounded envelopes of all finite subsets:

\begin{conjecture}\label{conj:no_enveloping}
If for every $n\in\omega$ there is $a\in\omega$ such that the number of relations of arity $a$ is at least $n$ then for every embedding $\varphi\colon\str H\to \str G$ there is some $k\in \omega$ such that for every $R\in \omega$, there is a set $S\subseteq \varphi[\str H]$ with $\lvert S\rvert = k$ whose all envelopes have height at least $R$.
\end{conjecture}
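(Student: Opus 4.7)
Fix an embedding $\varphi\colon\str H\to \str G$. The plan is to first locate a ``critical dimension'' $k$ depending on $\varphi$ at which the enveloping structure must break down, then use the unboundedness of arities to produce size-$k$ subsets of $\varphi[\str H]$ whose envelopes are necessarily arbitrarily tall.

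For the first step I would, for each $m\geq 0$, examine the family of $m$-slices $\{v^{\bar x}:v\in \varphi[\str H],\,|\bar x|=m\}\subseteq T_m$. Asking that every size-$k$ subset of $\varphi[\str H]$ admit a bounded envelope is morally equivalent to requiring these families for $m<k$ to be simultaneously realisable as strong subtrees of $T_m$, which is precisely what being $k$-enveloping supplies. Under the unbounded-signature hypothesis I would argue via a diagonal/compactness argument on the trees $T_m$ that there is a least $m=m(\varphi)$ for which such a coherent arrangement fails, and set $k=m+1$. If coherence held for every $m$, stitching the arrangements together would yield an embedding that is $k$-enveloping for all $k$, contradicting the remark following Lemma~\ref{lem:enveloping}.

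For the second step, given a target $R\in\omega$, I would combine universality of $\str H$ with the unboundedness assumption: choose an arity $a$ with $n_a$ strictly larger than the number of distinct valuation patterns a $k$-tuple of vertices in $T_0({<}R)$ can realise, a finite quantity determined by $\sigma$ via Lemma~\ref{obs:isom}. Embed into $\str H$ many pairwise distinct $a$-ary colour patterns on a shared vertex set containing the chosen $k$ vertices; if the image $k$-set admitted an envelope of height $\leq R-1$, the unique structural embedding of $T_0({<}R-1)$ into the envelope would conjugate these patterns to $a$-tuples living inside $T_0({<}R-1)$, and a pigeonhole argument would force two distinct patterns to share a valuation encoding, contradicting injectivity of $\varphi$. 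The main obstacle I foresee is the first step: a bare failure of slice-coherence at some finite dimension does not obviously yield a uniform $k$ working for all $R$, and the argument needs an additional quantitative ingredient ensuring that the failures at dimension $m(\varphi)$ grow arbitrarily severe as $R\to\infty$. This will likely require either a Ramsey-theoretic refinement of the envelope construction in Lemma~\ref{lem:envelopes} or a finer accounting of how many distinct relation symbols of each arity can be faithfully encoded inside a height-$R$ valuation tree.
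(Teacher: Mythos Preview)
The statement you are attempting to prove is labelled a \emph{conjecture} in the paper, and the paper offers no proof of it; it is presented as an open problem motivating the shape of Theorem~\ref{thm:structures}. So there is no ``paper's proof'' to compare against, and your proposal should be read as an attempt to resolve an open question.

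As an attempted proof, the plan has a genuine circularity in the first step. You write that if slice-coherence held for every $m$, then stitching would produce an embedding that is $k$-enveloping for all $k$, ``contradicting the remark following Lemma~\ref{lem:enveloping}''. But that remark does not establish that no such embedding exists; it merely says the authors were \emph{unable to produce} one and that they \emph{conjecture} none exists --- indeed, the remark points forward to the very conjecture you are trying to prove. So you are invoking the conjecture (or a close variant) to derive the contradiction that is supposed to establish it.

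The second step also has a mismatch between the pigeonhole object and the target. You pick an arity $a$ with $n_a$ large and realise many $a$-ary colour patterns on a vertex set \emph{containing} the chosen $k$ vertices; but $a$-ary relations need $a\geq k$ vertices to witness, so the patterns live on a set strictly larger than $S$. An envelope is a valuation tree containing $S$, not the larger $a$-set, and the structural embedding of Lemma~\ref{obs:isom} only controls values $x_0(\lvert x_1\rvert,\ldots)$ among nodes of the envelope itself. There is no mechanism in your sketch by which the extra $a-k$ vertices, or the $a$-ary relations they participate in, force the envelope of the fixed $k$-subset $S$ to be tall. The pigeonhole you describe would at best show that some $a$-subset (with $a$ growing with $R$) has no short envelope, which is the wrong quantifier order for the conjecture: you need a single $k$ that works for all $R$.
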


We can now proceed with a proof of Proposition~\ref{prop:main}.

\begin{proof}[Proof of Proposition~\ref{prop:main}]
Fix a finite $L$-hypergraph $\str A$ with $\lvert A\rvert = k$, a $k$-enveloping embedding $\varphi\colon \str H\to \str G$ (for example one given by Lemma~\ref{lem:enveloping}), and a colouring $\chi_0 \colon {\str{H}\choose\str{A}}\to p$. Since $\str{H}$ is universal, it follows that there is an embedding $\theta \colon \str{G}\to \str{H}$. Consider the colouring $\chi\colon{\str{G}\choose\str{A}} \to p$ obtained by setting
$\chi\left(f\right) = \chi_0\left(\theta \circ f\right)$ for every $f \in \binom{\str{G}}{\str{A}}$.

Let $h = R(k)$ be given by Lemma~\ref{lem:envelopes} and let $\str{G}_h$ be the induced sub-$L$-hypergraph of $\str{G}$ on $T_0({<}h)$. We enumerate ${\str{G}_h\choose\str{A}} = \{ f_i : i \in \ell \}$ for some $\ell \in \omega$.

By Lemma~\ref{obs:isom}, for every valuation tree $T$ of height $h$,
there is a structural embedding $f_T \colon T_0({<}h) \to T$ which is also an isomorphism from $\str G_h$ to the substructure of $\str G$ induced on $T$.
Let $\str{S}=(S_i : i\in h)$ be a strong vector subtree of $\str{T}\restriction_h$ of height $h$ and
consider the structural embedding $f = f_{\val(S_i : i\in h)} \colon \str{G}_h \to \val(S_i : i\in h)$.
Put \[\bar\chi(\str{S}) = \left\langle\, \chi\left(f \circ f_i\right) : i \in \ell \,\right\rangle,\]
which is a finite colouring of $\Str_h(\str{T}\restriction_h)$.
By Theorem~\ref{thm:Milliken}, there is an infinite strong vector subtree of $\str{T}\restriction_h$ which is monochromatic with respect to $\bar\chi$.
Extend it arbitrarily to an infinite strong vector subtree of $\str T$ with the same level set and let $U$ be its corresponding valuation subtree. Note that the extension does not influence in any way the valuation subtrees of height $h$.
The structural embedding $\psi \colon T_0\to U$ given by Lemma~\ref{obs:isom} is a hypergraph embedding $\psi \colon \str{G}\to \str{G}$.

We claim that $\theta\circ \psi \circ \varphi[\str{H}]$ is the desired copy $g[\str{H}]$ of $\str{H}$, in which copies of $\str{A}$ have at most $\ell$ different colours in~$\chi_0$. This is true, because by Lemma~\ref{lem:envelopes} every copy of $\str A$ in $\varphi[\str H]$ is contained in a valuation subtree of height $h$. All of these subtrees have the same colour (with respect to $\bar\chi$) in $\psi\circ \varphi[\str H]$, and so we know that $\chi$ takes at most $\ell$ different values on $\psi\circ \varphi[\str H]\choose\str{A}$. Consequently, $\chi_0$ attains at most $\ell$ different values on $\theta\circ \psi \circ \varphi[\str{H}] \choose \str A$.
\end{proof}

\section{The main results}
In this section we do three simple constructions on top of Proposition~\ref{prop:main} in order to prove Theorem~\ref{thm:structures}.

\subsection{Unary relations}
First, we introduce a general construction for adding unary relations in order to prove the following theorem.
\begin{theorem}\label{thm:main}
Let $L$ be a relational language with finitely many relations of every arity greater than one and with finitely or countably many unary relations. Let $\str H$ be a countable $L$-hypergraph universal for all countable $L$-hypergraphs. Then $\str H$ has finite big Ramsey degrees.
\end{theorem}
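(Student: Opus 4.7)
The plan is to reduce Theorem~\ref{thm:main} to Proposition~\ref{prop:main} by two routine elaborations: extending Proposition~\ref{prop:main} to finitely many unary relations, and lifting back from a suitable reduct to the full language $L$.

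Fix a finite $L$-substructure $\str A$ of $\str H$. Only finitely many unary relations of $L$ are instantiated in $\str A$; call this finite set $U_{\str A}$, and let $L_{\str A}=(L\setminus L_{\mathrm{un}})\cup U_{\str A}$, a language with only finitely many relations of every arity. Let $\str H_0\subseteq\str H$ denote the induced substructure on the vertices whose $L$-unary colour lies in $U_{\str A}\cup\{*\}$. Using universality of $\str H$, it is straightforward to verify that $\str H_0$ is a universal $L_{\str A}$-hypergraph, and that $L$-embeddings $\str A\to\str H$ correspond bijectively to $L_{\str A}$-embeddings $\str A\to\str H_0$: every $L$-copy of $\str A$ in $\str H$ must avoid vertices of colour in $U\setminus U_{\str A}$, and conversely any $L_{\str A}$-embedding $\str A\to\str H_0$ vacuously preserves the remaining unary relations.

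The first step is to extend Proposition~\ref{prop:main} to languages with finitely many unary relations. The valuation-tree framework of Section~\ref{sec:lhypergraphs} leaves an unused slot at length-zero tuples (since $I^n_{<\omega}$ does not include the empty tuple), and the entry $\sigma_0$ of the signature is similarly unused. I would repurpose them to carry a unary tag drawn from the finite set $U_{\str A}\cup\{*\}$. With this enrichment, the notions of $k$-enveloping embedding (Definition~\ref{defn:enveloping}) and envelope (Lemma~\ref{lem:envelopes}) adapt verbatim --- only a finite amount of extra bookkeeping is introduced --- and Milliken's theorem applies without change. This establishes finite big Ramsey degrees for $\str A$ in $\str H_0$ as an $L_{\str A}$-hypergraph.

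Given a colouring $\chi\colon\binom{\str H}{\str A}_L\to k$, transported via the bijection above to a colouring on $\binom{\str H_0}{\str A}_{L_{\str A}}$, the extended proposition yields an $L_{\str A}$-embedding $f\colon\str H_0\to\str H_0$ on which $\chi$ takes at most $\ell$ values. Since big Ramsey degrees are enumeration-invariant for structures without algebraicity, we may choose an enumeration of $\str H$ leaving ample room between consecutive elements of $\str H_0$. A back-and-forth using universality of $\str H$ then extends $f$ to a monotone $L$-embedding $g\colon\str H\to\str H$ with $g\restriction_{\str H_0}=f$ and $g[\str H\setminus\str H_0]\subseteq\str H\setminus f[\str H_0]$. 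Because $\str A$ uses only colours in $U_{\str A}\cup\{*\}$, every $L$-copy of $\str A$ in $g[\str H]$ lies in $g[\str H_0]=f[\str H_0]$, so $\chi$ takes at most $\ell$ values on $\binom{g[\str H]}{\str A}_L$, as required.

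The main obstacle is the first step: while adjusting the valuation-tree framework to finitely many unary relations is conceptually a minor refinement, each step of the proof of Proposition~\ref{prop:main} --- particularly the enveloping-embedding construction, the envelope bound, and their interaction with Milliken's theorem --- must be re-examined in the enriched setting to confirm that the unary tag interacts correctly with the existing branching structure.
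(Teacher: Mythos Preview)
Your approach differs from the paper's and has a real gap in the final step.

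Once you have obtained $f\colon\str H_0\to\str H_0$, you try to extend it to a monotone $L$-embedding $g\colon\str H\to\str H$ with $g\restriction_{\str H_0}=f$. This is not possible in general. For any $v\in\str H\setminus\str H_0$, the image $g(v)$ must not only lie between two consecutive elements of $f[\str H_0]$ (which your ``ample room'' re-enumeration addresses) but must also have prescribed relations with \emph{every} element of $f[\str H_0]$, in particular with the infinitely many $f(w)$ for $w\in\str H_0$, $w>v$, which are already fixed. Universality of $\str H$ (even homogeneity) only lets you realize types over finite sets; it cannot produce a vertex satisfying infinitely many pre-committed relational constraints. The phrase ``back-and-forth'' is misplaced here: back-and-forth builds an isomorphism by alternating finite extensions, but one side is already pinned down on an infinite set. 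Weakening to $g[\str H_0]\subseteq f[\str H_0]$ does not help either: when placing $g(v)$ for $v\in\str H_0$ you would need a vertex of $f[\str H_0]$ with prescribed relations to previously placed $g(w)$ lying \emph{outside} $f[\str H_0]$; but the big-Ramsey statement for $\str H_0$ controls only the internal $L_{\str A}$-structure of $f[\str H_0]$, not how it sits inside $\str H$. Nothing prevents $f[\str H_0]$ from consisting entirely of vertices sharing a fixed binary relation with some external vertex of a unary colour outside $U_{\str A}$.

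The paper avoids this with a blowup construction that makes the lifting automatic and renders both your reduction to $\str H_0$ and your modification of the valuation-tree machinery unnecessary. With $L^-$ the language stripped of unaries and $\str M$ a universal countable $L^-$-hypergraph, define $\str G$ on $\{(v,i):v\in\omega,\ i<\min(v+1,u)\}$, with $(v,i)\in U^i$ and higher-arity relations pulled back along the projection $(v,i)\mapsto v$. Any self-embedding $\psi\colon\str M\to\str M$ from Proposition~\ref{prop:main} then lifts to $\psi_0\colon\str G\to\str G$ via $(v,i)\mapsto(\psi(v),i)$: the unary tag is carried in a separate coordinate, so no extension problem arises. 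Transversal copies of $\str A$ in $\str G$ project to copies of the unary-free reduct $\str A^-$ in $\str M$, so the bound transfers, and a transversal embedding $\str H\hookrightarrow\str G$ together with $\str G\hookrightarrow\str H$ finishes the proof. This handles countably many unaries directly.
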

\begin{proof}
Without loss of generality we can assume that every vertex of $\str H$ is in exactly one unary relation. Assume that the unary relations of $L$ are $\{U^i : i\in u\}$ for some $u\in \omega+1$.

Let $L^-$ be the language which one gets from $L$ by removing all unary relations and let $\str M$ be a countable $L^-$-hypergraph universal for all countable $L^-$-hypergraphs (it exists for example by the \Fraisse{} theorem~\cite{Fraisse1953}). Without loss of generality we assume that the vertex set of both $\str M$ and $\str H$ is $\omega$. We now define an $L$-hypergraph $\str G$ as follows:
\begin{enumerate}
\item The vertex set of $\str G$ is $G = \{(v, i) : v\in \omega, i\in \min(v+1, u)\}$ with enumeration given by the lexicographic order,
\item vertex $(v,i)$ is in unary relation $U^j$ if and only if $i=j$, and
\item $((x_0,i_0),\ldots,(x_n,i_n))\in\rel{G}{}$ if and only if $(x_0,\ldots,x_n) \in \nbrel{\str M}{}$.
\end{enumerate}

Fix some finite $L$-hypergraph $\str A$ and let $\str A^-$ be its $L^-$-reduct. By Proposition~\ref{prop:main} there is $\ell\in \omega$ such that $\str M \longrightarrow (\str M)^{\str A^-}_{k,\ell}$ for every $k\in \omega$. We will now prove that $\str G\longrightarrow (\str H)^{\str A}_{k,\ell}$ for every $k\in \omega$.

Let $\pi\colon G\to M$ be the map sending $(v,i)\mapsto v$. We say that a map $f\colon X\to G$ is \emph{transversal} if $\pi\circ f$ is injective. Note that if $f\colon \str A\to \str G$ is a transversal embedding then $\pi\circ f$ is an embedding $\str A^-\to \str M$.
A colouring $\chi_0\colon {\str G\choose \str A} \to k$ thus induces a partial colouring $\chi\colon {\str M\choose {\str A^-}}\to k$ by ignoring non-transversal copies and composing with $\pi$. There may be copies of $\str A^-$ which do not get any colour, we assign them a colour arbitrarily.

Since $\str M \longrightarrow (\str M)^{\str A^-}_{k,\ell}$, there is an embedding $\psi\colon \str M\to \str M$ with $\chi$ attaining at most $\ell$ colours on ${{\psi[\str M]}\choose {\str A^-}}$. Let $\psi_0\colon \str G\to \str G$ be the embedding mapping $(v,i)\mapsto (\psi(v),i)$. We have that $\chi_0$ attains at most $\ell$ colours on transversal copies from ${{\psi_0[\str G]}\choose \str A}$.

Let $\str H^-$ be the $L^-$-reduct of $\str H$ forgetting unary relations. Since $\str H^-$ is a countable $L^-$-hypergraph, there is an embedding $\varphi_0\colon \str H^-\to \str M$ with the property that $\varphi_0(x) \geq i_x$ for $x\in U^{i_x}_\str H$. It is straightforward to check that the map $\varphi\colon H\to G$, defined by $\varphi(x) = (\varphi_0(x), i_x)$, is a transversal embedding $\str H\to \str G$, hence $\chi_0$ attains at most $\ell$ colours on ${{\psi_0\circ\varphi[\str H]}\choose \str A}$. Consequently, $\str G\longrightarrow (\str H)^{\str A}_{k,\ell}$.

Knowing that $\str G\longrightarrow (\str H)^{\str A}_{k,\ell}$, proving that $\str H\longrightarrow (\str H)^{\str A}_{k,\ell}$ is straightforward: $\str G$ is a countable $L$-hypergraph and so there is an embedding $\theta\colon \str G\to \str H$, which means that a colouring of ${\str H\choose \str A}$ restricts to a colouring of ${\str G\choose \str A}$ exactly as in the proof of Proposition~\ref{prop:main}.
\end{proof}

\subsection{Non-$L$-hypergraphs}
For the constructions above it was convenient to work with $L$-hypergraphs. It is folklore that one does not lose any generality working with $L$-hypergraphs only (when structures are enumerated), we give a proof for completeness.
\begin{prop}\label{prop:non_hypergraphs}
Let $L$ be a relational language with finitely many relations of every arity greater than one and with finitely or countably many unary relations, let $\mathcal C$ be the class of all countable $L$-structures where every relation is injective and every vertex is in exactly one unary relation, and let $\str H\in \mathcal C$ be universal for $\mathcal C$. Then $\str H$ has finite big Ramsey degrees.
\end{prop}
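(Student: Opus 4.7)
The plan is to reduce this to Theorem~\ref{thm:main} by encoding each $L$-structure in $\mathcal C$ as an $L'$-hypergraph for a suitable finite-per-arity expansion $L'$ of $L$. First I would construct $L'$: keep all unary relations of $L$ unchanged, and for each arity $n\geq 2$, enumerate the (finitely many) $n$-ary relations of $L$ as $R_1,\ldots,R_{m_n}$; then, for every sequence $\bar T=(T_1,\ldots,T_{m_n})\in(2^{S_n})^{m_n}$ with at least one $T_i$ nonempty, introduce a single symmetric $n$-ary relation symbol $R^{\bar T}$ in $L'$. Since $(2^{n!})^{m_n}-1$ is finite for every $n$, the language $L'$ still satisfies the finiteness hypothesis of Theorem~\ref{thm:main}.

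Next I would set up a canonical bijection $\Phi$ between structures in $\mathcal C$ and $L'$-hypergraphs with exactly one unary relation per vertex (on the same enumerated vertex set). Given $\str A\in\mathcal C$ and an injective set $\{v_1,\ldots,v_n\}$ with $v_1<\cdots<v_n$, let $T_i\subseteq S_n$ consist of those permutations $\pi$ for which $(v_{\pi(1)},\ldots,v_{\pi(n)})$ lies in $R_i$ in $\str A$; if the resulting combined type $\bar T=(T_1,\ldots,T_{m_n})$ is not identically empty, declare $\{v_1,\ldots,v_n\}$ to lie in $R^{\bar T}$ in $\Phi(\str A)$, and keep unary relations unchanged. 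Each set is placed in at most one relation of $L'$ because its combined type is unique; each $R^{\bar T}$ is injective (the $v_i$ are distinct) and symmetric by construction; hence $\Phi(\str A)$ is a genuine $L'$-hypergraph. The construction is clearly reversible by unfolding each type label $\bar T$ back into the corresponding ordered tuples in each $R_i$.

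The key observation is then that a monotone map $f\colon A\to B$ is an embedding $\str A\to\str B$ of $L$-structures in $\mathcal C$ if and only if it is an embedding $\Phi(\str A)\to\Phi(\str B)$ of $L'$-hypergraphs, because combined types are preserved under $f$ exactly when each $R_i$ is. Consequently $\binom{\str B}{\str A}$ and $\binom{\Phi(\str B)}{\Phi(\str A)}$ are canonically identified, and $\Phi(\str H)$ is universal among $L'$-hypergraphs with one unary relation per vertex. Theorem~\ref{thm:main} (whose proof, as noted in its opening line, operates under the WLOG ``one unary relation per vertex'' assumption and therefore applies to $\Phi(\str H)$) yields finite big Ramsey degrees for $\Phi(\str H)$, and these transfer back to $\str H$ via the canonical identification of colourings under $\Phi$. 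The only real bookkeeping is verifying that the combined-type construction respects the three defining conditions of an $L'$-hypergraph (injective, symmetric, at most one relation per tuple), which the uniqueness of $\bar T$ per set handles cleanly; I do not expect any serious obstacle beyond this.
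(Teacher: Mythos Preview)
Your proposal is correct and follows essentially the same approach as the paper: encode each ordered $n$-set by the single ``combined type'' recording, for every $n$-ary relation and every permutation, whether the permuted tuple lies in that relation, thus producing an $L'$-hypergraph; the paper indexes the new symbols by nonempty subsets of $\{(R,\pi):\arity(R)=n,\ \pi\in\Sym(n)\}$, which is just a relabelling of your tuples $(T_1,\ldots,T_{m_n})$. Your explicit remark that the proof of Theorem~\ref{thm:main} already operates under the one-unary-per-vertex assumption is a nice touch that the paper leaves implicit.
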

Note that this statement is the same as the statement of Theorem~\ref{thm:structures} for $\mathcal F = \emptyset$ with the extra technical assumption that every vertex is in exactly one unary relation.
\begin{proof}
For every $i\in \omega$, put
$$M_i = \{(\rel{}{},\pi) : \rel{}{}\in L, i = \arity(\rel{}{}), \pi\in \Sym(i)\}.$$
Given $\str A\in \mathcal C$ and vertices $x_1 < \cdots < x_n \in \str A$, put 
$$M_\str A(x_0,\ldots,x_{n-1}) = \{(\rel{}{},\pi)\in M_n : (x_{\pi(0)},\ldots,x_{\pi(n-1))}) \in \rel{A}{}\}.$$
Define a language $L'$ containing the same unary relations as $L$ and for every $i\geq 2$ and every nonempty $S\subseteq M_i$ an $i$-ary relation $\rel{}{S}$. Note that $L'$ has only finitely many relations of every arity greater than one and finitely or countably many unary relations. Let $\mathcal C'$ be the class of all $L'$-hypergraphs. 

Define $T$ to be the map assigning to every $\str A\in \mathcal C$ a structure $T(\str A)\in \mathcal C'$ on the same vertex set with the same unary relations such that for every $x_0<\cdots<x_{n-1}\in A$ with $n\geq 2$ we have
$$\{x_0, \ldots, x_{n-1}\} \in \nbrel{T(\str A)}{M_\str A(x_0,\ldots,x_{n-1})}$$ if $M_\str A(x_0,\ldots,x_{n-1})$ is nonempty. There are no other relations in $T(\str A)$. Define $U$ to be the map assigning to every $\str A\in \mathcal C'$ a structure $U(\str A)\in \mathcal C$ on the same vertex set with the same unary relations such that whenever we have $x_0 < \cdots < x_{n-1} \in \str A$ with $\{x_0,\ldots,x_{n-1}\} \in \rel{A}{S}$, we put
$$(x_{\pi(0)},\ldots,x_{\pi(n-1))}) \in \nbrel{U(\str A)}{}$$
for every $(\rel{}{},\pi) \in S$. There are no other relations in $U(\str A)$.

It is straightforward to verify $T$ and $U$ are mutually inverse and define a bijection between $\mathcal C$ and $\mathcal C'$. Moreover, given $\str A,\str B\in \mathcal C$ and a function $f\colon A\to B$, it holds that $f$ is an embedding $\str A\to \str B$ if and only if it is an embedding $T(\str A)\to T(\str B)$. Consequently, the big Ramsey degree of $\str A$ in $\str H$ is the same as the big Ramsey degree of $T(\str A)$ in $T(\str H$), hence finite by Theorem~\ref{thm:main}.
\end{proof}

\subsection{Forbidding structures}
\begin{proof}[Proof of Theorem~\ref{thm:structures}]
Let $\mathcal F$ be the family witnessing that $\str H$ is unrestricted. Since $\str H$ is countable, there are only countably many types of vertices. Hence, without loss of generality, we can assume that each vertex of $\str H$ is in exactly one unary relation and that $\mathcal F$ forbids no single unary relation (otherwise we can remove it from the language).

Let $\mathcal C$ be the class of all countable $L$-structures where all relations are injective and every vertex is in exactly one unary relation, and let $\str M \in \mathcal C$ be universal for $\mathcal C$ (such $\str M$ exists for example by the \Fraisse{} theorem~\cite{Fraisse1953}). We say that a subset $S\subseteq M$ is \emph{bad} if $\str M$ induces a structure from $\mathcal F$ on $S$ and we say that a tuple $\bar{x}$ of elements of $\str M$ is \emph{bad} if it contains a bad subset. Let $\str G$ be the $L$-structure on the same vertex set as $\str M$ such that $\bar x\in \rel{G}{}$ if and only if $\bar x\in \rel{M}{}$ and $\bar{x}$ is not bad (i.e. we remove bad tuples from all relations). Clearly, $\str G$ is $\mathcal F$-free, and note that $\str G$ and $\str M$ have the same unary relations. We will prove that $\str G$ has finite big Ramsey degrees. Since $\str G$ embeds into $\str H$ and any embedding $\str H\to \str M$ is also an embedding $\str H\to \str G$, this would imply that $\str H$ has finite big Ramsey degrees, thereby proving the theorem.

Fix a finite $\mathcal F$-free $L$-structure $\str A$ where all relations are injective. Let $\iota$ be the identity map understood as a function $G\to M$ and let $\str A_0,\ldots,\str A_m$ be some enumeration of all isomorphism types of structures from $\{\iota\circ f[\str A] : f \text{ is an embedding } \str A\to \str G\}$, that is, it is an enumeration of all possible isomorphism types of structures which, after removing bad tuples from relations, are isomorphic to $\str A$. There are only finitely many of them because they have the same unary relations as $\str A$ and there are only finitely many $L$-structures on a given number of vertices with given unary relations, up to isomorphism.

For every $0\leq i\leq m$, let $\ell_i$ be the big Ramsey degree of $\str A_i$ in $\str M$ ($\ell_i$ is finite by Proposition~\ref{prop:non_hypergraphs}) and put $\ell = \sum_{i=0}^m \ell_i$. We now prove that $\str G\longrightarrow (\str G)^\str A_{k,\ell}$ for every $k\in \omega$.

Fix a colouring $\chi\colon {\str G \choose \str A} \to k$ and let $\chi_i\colon {\str M\choose \str A_i} \to k$, $0\leq i\leq m$, be the colourings obtained from $\chi$ by composing with $\iota$. By inductive usage of Proposition~\ref{prop:non_hypergraphs} we get an embedding $f\colon \str M\to \str M$ such that, for every $0\leq i\leq m$, $\chi_i$ attains at most $\ell_i$ colours on ${f[\str M] \choose \str A_i}$. As all structures in $\mathcal F$ are covered by a relation, it follows from the construction that $f$ is also an embedding $\str G\to \str G$, and since every copy in ${f[\str G] \choose \str A}$ corresponds to a copy of $\str A_i$ in $f[\str M]$ for some $i$, it follows that $\chi$ attains at most $\ell$ colours on ${f[\str G] \choose \str A}$, hence the big Ramsey degree of $\str A$ in $\str G$ is indeed finite.
\end{proof}

\section{Infinite big Ramsey degrees?}\label{sec:inf}
As soon as one has infinite branching, the Milliken theorem stops being true even for colouring vertices (see Proposition~\ref{prop:hl}). This means that one cannot generalise our methods directly for languages with infinitely many relations of some arity at least 2. In fact, no known methods generalise because all of them find very specific \emph{tree-like} copies and one can construct infinite colourings which are persistent on these copies. Doing this in full generality requires developing the theory of \emph{weak types} and it will appear elsewhere. Here we only show a special case which is technically much simpler but only works for binary relations.



\begin{definition}
Let $\str H$ be a countable relational structure with vertex set $\omega$. Given $X\subseteq \omega$, we define the \emph{type of $X$}, denoted by $\tp_\str H(X)$, to be the isomorphism type of the substructure of $\str H$ induced on $X$. (Note that this corresponds to the quantifier-free type over the empty set from model theory if we consider the enumeration to be part of the language.)

Let $f\colon \str H\to \str H$ be an embedding. We say that $f$ is \emph{1-tree-like} (or, in this paper, simply \emph{tree-like}) if for every finite $X = \{x_0 < \cdots < x_m\} \subset \omega$, every $0\leq i \leq m$ and every $x\in \omega$ such that $x > x_m$ there exists $y > x_m$ such that $$\tp_\str H(f[X] \cup \{f(y)\}) = \tp_\str H(X\cup\{x\})$$ and moreover $$\tp_\str H(\{0,\ldots,f(x_0)-1,f(y)\}) = \tp_\str H(\{0,\ldots,f(x_0)-1,f(x_i)\}).$$
\end{definition}

The first condition is satisfied for every embedding by the choice $y=x$. The second condition says that for tree-like embeddings we have some control even over types with respect to the ambient structure $\str H$ within which our copy lies. Note that every structural embedding $\str T^\sigma \to \str T^\sigma$ is a tree-like embedding of the corresponding hypergraphs.

\begin{example}\label{ex:rado}
Let $L$ be a language consisting of countably many binary relations and let $\str R$ be the countable homogeneous $L$-hypergraph (that is, the infinite-edge-coloured countable random graph where the colour classes are generic). One can repeat our constructions for $\str R$ and get the everywhere infinitely branching tree $T = \omega^{<\omega}$ as $T_0$. If the Milliken theorem was true for infinitely branching trees, it would produce tree-like copies (which would arise simply as strong subtrees isomorphic to $\omega^{<\omega}$).

On the other hand, assume that the vertex set of $\str R$ is $\omega$ and consider any embedding $f\colon \str R\to \str R$ such that $f(i)$ is connected to vertex $0$ by the $i$-th relation (such an embedding exists by the extension property). This embedding is, in a way, as far as possible from being tree-like, because everything branches at level $0$ and one cannot say anything about the behaviour of this copy with respect to the external vertices.
\end{example}

All known big Ramsey methods produce tree-like copies, and whenever the exact big Ramsey degrees are known, the proof can be adapted to show that every copy contains a subcopy which is ``weakly tree-like'' (see e.g.~\cite{Balko2021exact,Balko2023,Laflamme2006}). Example~\ref{ex:rado} shows that such a property fails for the infinite-edge-coloured random graph. We believe that working in the category of tree-like embeddings (or some other variant of nice embeddings) is a reasonable thing to do as it still captures (a lot of) the combinatorial complexity of the problems, in addition to allowing one to prove negative results.

In this section we will see an instance of this. Given a relational structure $\str A$, its Gaifman graph is the graph on the same vertex set where vertices $x\neq y\in A$ are connected by an edge if and only if there exists a tuple $\bar{z}$ of vertices of $\str A$ containing both $x$ and $y$ which belongs to a relation of $\str A$. A relational structure $\str A$ is \emph{irreducible} if its Gaifman graph is a complete graph.

\begin{theorem}\label{thm:inf}
Let $L$ be a relational language, let $\mathcal F$ be a set of finite irreducible $L$-structures and let $\str H$ be a countable $\mathcal F$-free structure universal for all countable $\mathcal F$-free structures. Assume that there are $\mathcal F$-free structures $\str B$ and $\str U$ consisting of one vertex, and infinitely many pairwise non-isomorphic 1-vertex $\mathcal F$-free extensions $\str C_0,\str C_1,\ldots$ of $\str B$ such that the added vertices come last in the enumeration and each of them is isomorphic to $\str U$. Then there exists an $\mathcal F$-free structure $\str A$ on three vertices and a colouring $c\colon {\str H\choose \str A}\to \omega$ such that if $f\colon \str H\to \str H$ is a tree-like embedding then $c$ attains all values on ${f[\str H] \choose \str A}$.
\end{theorem}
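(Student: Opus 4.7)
The plan is to define $\str A$ to be the $L$-structure on three vertices $a_0 < a_1 < a_2$ where $a_0$ realises the 1-vertex type of $\str B$, both $a_1$ and $a_2$ realise the 1-vertex type of $\str U$, the pairs $\{a_0, a_1\}$ and $\{a_0, a_2\}$ are both isomorphic to $\str C_0$, and no other relations hold among the three vertices. I would first verify $\str A$ is $\mathcal F$-free by noting that every member of $\mathcal F$ is irreducible, so any embedding $\str F \to \str A$ with $\str F \in \mathcal F$ would have to land in an irreducible induced substructure of $\str A$; the irreducible induced substructures of $\str A$ have at most two vertices and are isomorphic to one of $\str B$, $\str U$, or $\str C_0$ (all $\mathcal F$-free by hypothesis), while the full triple $\{a_0, a_1, a_2\}$ is not irreducible because no relation is placed between $a_1$ and $a_2$.

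Next I would define the colouring $c\colon {\str H \choose \str A} \to \omega$ as follows. For $g \in {\str H \choose \str A}$ with $v_j = g(a_j)$, choose an intrinsic reference vertex $u_g \in \str H$ depending on $g$; for concreteness, take $u_g$ to be the $\leq$-largest type-$\str B$ vertex of $\str H$ satisfying $u_g < v_1$ and $u_g \neq v_0$, with some default value if no such vertex exists. Set $c(g) = j$ if $(u_g, v_1) \cong \str C_j$, and $c(g) = 0$ otherwise. By the universality of $\str H$ and the hypothesis that infinitely many pairwise non-isomorphic $\str C_j$'s exist, $c$ is surjective onto $\omega$: for each $j$ there exist type-$\str B$ vertices $u, v_0$ and a type-$\str U$ vertex $v_1$ in $\str H$ with $u < v_1$, $u \ne v_0$, with $(u, v_1) \cong \str C_j$ and $(v_0, v_1) \cong \str C_0$, which can be completed to a copy of $\str A$ by appropriate choice of $v_2$.

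The main claim to establish is that for every tree-like embedding $f\colon \str H \to \str H$ and every $j \in \omega$, there exists $g \in {f[\str H] \choose \str A}$ with $c(g) = j$. I would fix a type-$\str B$ vertex $x_0 \in \str H$ and iteratively construct $x_1, x_2 > x_0$ in $\str H$ such that $(x_0, x_i) \cong \str C_0$ for $i \in \{1, 2\}$, no relation holds between $x_1$ and $x_2$, and the intrinsic reference $u_{f\circ(x_0, x_1, x_2)}$ satisfies $(u_{f\circ(x_0, x_1, x_2)}, f(x_1)) \cong \str C_j$. The tree-like property of $f$ guarantees that for each specified internal type $\tau$ of a 1-vertex extension of an initial $X$ containing $x_0$, there exists $y > \max X$ with $(X, y) \cong \tau$ and $f(y)$ matching the specified external type on $\{0, \ldots, f(x_0) - 1\}$; combined with the infinitude of the $\str C_j$'s and the richness of $\str H$, this flexibility would let us pick $x_1$ so that $f(x_1)$ realises the desired $\str C_j$-relation with $u_{f\circ(x_0, x_1, x_2)}$.

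The main obstacle is the circular dependency inherent in the colouring: the intrinsic reference $u_g$ is itself determined by $v_1 = f(x_1)$'s position in the enumeration of $\str H$, which depends on which $x_1$ we select. Overcoming this will likely require a back-and-forth construction that iteratively refines the choice of $x_1$ while tracking the shifting intrinsic reference, using the tree-like property to guarantee that each adjustment preserves the internal type of the copy while allowing us to tune the external $\str C_j$-relation. I expect the technical heart of the proof to lie in this coordination, essentially a weak-type-style analysis of tree-like embeddings (as foreshadowed in the paper) reconciling the infinite family of $\str C_j$'s with the rigidity imposed by $f$.
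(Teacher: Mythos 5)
There is a genuine gap, and it lies exactly where you suspect, but it is not repairable by a back-and-forth refinement: your colouring reads off data that a tree-like embedding simply does not control. Tree-likeness of $f$ constrains only (i) the internal type $\tp_{\str H}(f[X]\cup\{f(y)\})$ and (ii) the type of $f(y)$ over the initial segment $\{0,\ldots,f(x_0)-1\}$, where it merely forces $f(y)$ to look like some already-present $f(x_i)$. It says nothing about the relations between $f(y)$ and vertices of $\str H$ lying strictly between $f(x_0)$ and $f(y)$ that are outside $f[X]$. Your reference vertex $u_g$, the $\leq$-largest type-$\str B$ vertex of $\str H$ below $v_1=f(x_1)$, will in general lie in precisely that uncontrolled region (type-$\str B$ vertices occur cofinally in $\str H$, so there is no reason for the largest one below $f(x_1)$ to fall below $f(x_0)$). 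Hence you cannot tune $(u_g,f(x_1))$ to realise $\str C_j$. Redefining $u_g$ to sit below $f(x_0)$ does not help either: clause (ii) only lets you copy the type of an existing vertex over that segment, never prescribe an arbitrary $\str C_j$ there.

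The paper's colouring is engineered around exactly this constraint, and any correct colouring must be. It records the sequence $s(v)$ of types of the $\str U$-vertex $v$ over all copies $\str B_0,\str B_1,\ldots$ of $\str B$, truncated at the index of the $\str B$-vertex of the copy of $\str A$. The initial segment of $s(v)$ up to the index $j(f(r_0))$ of a designated $\str B$-vertex in the image is controlled by clause (ii); the single next entry $s(v)_{j(f(r_0))}=q$ can be set to an arbitrary value by clause (i); the tail is uncontrolled. The Halpern--L\"auchli-style weight function $w$ together with the ``least $\ell$ such that $w(s|_\ell)\ge n$'' addressing (with $n$ the position of the first vertex of the copy, chosen larger than the weight of the controlled prefix) makes the colour depend only on the one freely chosen entry $q$ and renders it insensitive to the uncontrolled tail. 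Your $c$ instead evaluates $s(v)$ at an uncontrollable position, so the main claim fails regardless of how the choice of $x_1$ is coordinated; the ``weak-type-style analysis'' you defer to is the entire content of the theorem.
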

In other words, the big Ramsey degree of $\str A$ is infinite for tree-like copies. Note that if $\mathcal F$ either contains no enumeration or all enumerations of every finite $\str A$, the class of all finite $\mathcal F$-free structures is a free amalgamation class when one ignores the enumerations.

\begin{corollary}\label{cor:inf}
Let $L$ be a relational language containing infinitely many binary relations and let $\str H$ be a countable $L$-hypergraph universal for all countable $L$-hypergraphs. Then there exists an $L$-hypergraph $\str A$ on three vertices and a colouring $c\colon {\str H\choose \str A}\to \omega$ such that if $f\colon \str H\to \str H$ is a tree-like embedding then $c$ attains all values on ${f[\str H] \choose \str A}$.
\end{corollary}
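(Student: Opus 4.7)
The plan is to derive Corollary~\ref{cor:inf} as an immediate application of Theorem~\ref{thm:inf}, so the entire task reduces to verifying the hypotheses of that theorem in this setting. First, I would express the class of countable $L$-hypergraphs as the class of $\mathcal F$-free $L$-structures for a suitable set $\mathcal F$ of finite irreducible $L$-structures. The $L$-hypergraph axioms (injectivity, symmetry, and ``at most one relation per tuple'') can each be encoded by forbidding finite structures whose entire vertex set lies jointly in a single relational tuple; such structures have a complete Gaifman graph and are therefore irreducible. Hence $\str H$ being universal for countable $L$-hypergraphs is the same as being universal for countable $\mathcal F$-free structures for this choice of $\mathcal F$.

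Second, I would supply the witnesses required by Theorem~\ref{thm:inf}. Take $\str B$ and $\str U$ both to be the one-vertex $L$-structure whose unique vertex lies in no relation at all; this is $\mathcal F$-free and hence embeds into $\str H$. Enumerate the (by assumption, infinitely many) binary relations of $L$ as $R_0, R_1, \ldots$, and for each $i \in \omega$ define $\str C_i$ to be the two-vertex $L$-structure on $\{b, u\}$ with $b$ enumerated first and $u$ second, whose unary types match $\str B$ and $\str U$, and whose pair $\{b, u\}$ lies in the relation $R_i$ and in no other relation. Each $\str C_i$ is $\mathcal F$-free, since its only binary edge sits in a single relation symmetrically while the vertices carry no unary data; the added vertex $u$ is a copy of $\str U$; and the structures $\str C_i$ are pairwise non-isomorphic because they use distinct binary relations.

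Finally, applying Theorem~\ref{thm:inf} to this data produces a three-vertex $\mathcal F$-free structure $\str A$, which is itself an $L$-hypergraph, together with a coloring $c \colon {\str H \choose \str A} \to \omega$ attaining every value on ${f[\str H] \choose \str A}$ for every tree-like embedding $f \colon \str H \to \str H$; this is exactly the content of the corollary. The only delicate point is choosing $\mathcal F$ so as to carve out exactly the $L$-hypergraphs while still leaving the required two-vertex extensions $\str C_i$ available, which is routine bookkeeping rather than a substantive obstacle.
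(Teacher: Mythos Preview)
Your proposal is correct and follows essentially the same route as the paper: encode the $L$-hypergraph axioms by a family $\mathcal F$ of forbidden irreducible structures, take $\str B$ and $\str U$ to be the one-vertex structure in no unary relation, and use the infinitely many binary relations to produce infinitely many pairwise non-isomorphic one-vertex extensions $\str C_i$ before invoking Theorem~\ref{thm:inf}. The only cosmetic difference is that the paper lets the $\str C_i$ range over \emph{all} one-vertex extensions of $\str B$ by a vertex in no unary relation, whereas you pick one $\str C_i$ per binary relation; both choices yield an infinite family and the argument goes through identically.
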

\begin{proof}
Note that being an $L$-hypergraph can be described by a set $\mathcal F$ of forbidden irreducible structures. Let $\str B$ be the hypergraph on 1 vertex which is in no unary relations and let $\str C_0, \str C_1, \ldots$ be all possible one-vertex extensions of $\str B$ by a vertex in no unary relation. There are infinitely many of them as they correspond to binary relations in $L$. Therefore the conditions of Theorem~\ref{thm:inf} are satisfied and the conclusion follows.
\end{proof}
A particular example of a structure satisfying this corollary is the infinite-edge-coloured random graph from Example~\ref{ex:rado}.

Our proof of Theorem~\ref{thm:inf} is derived from a proof that the Halpern--L\"auchli theorem does not hold for the tree $\omega^{<\omega}$, which we now present as a nice warm-up. We believe that this proof is folkloristic, but we were unable to find it in the literature.

\begin{prop}\label{prop:hl}
Let $T=\omega^{<\omega}$ be the tree of all finite sequences of natural numbers and let $\sqsubseteq$ be the usual tree order by end-extension. There is a colouring $c\colon T\to \omega$ such that whenever $T'$ is a strong subtree of $T$ of infinite height then $c[T'] = \omega$.
\end{prop}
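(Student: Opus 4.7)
The plan is to define an explicit colouring $c\colon T\to\omega$ and to verify that $c[T']=\omega$ for every strong subtree $T'$ of $T$ of infinite height. The starting point is to unpack the structure of such a $T'$: it is determined by a root $r\in T$, an increasing sequence of used levels $\ell_0=|r|<\ell_1<\ldots$, and, for each non-maximal $s\in T'$ at level $\ell_k$ and each $n\in\omega$, the unique element of $\ImmSucc_{T'}(s)\cap\Succ_T(s\conc n)$. A simple consequence is that at each level $\ell_k$, the coordinate of a node at position $\ell_j$ (for $j<k$) takes every value in $\omega$ as the branching index chosen at level $\ell_j$ varies, while coordinates at other positions are determined by $T'$'s tail structure. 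Thus the $k$ \emph{branching} positions $\ell_0,\ldots,\ell_{k-1}$ are free, while the remaining $\ell_k-k$ positions are tail-controlled.

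I would look for a colouring of the form $c(t)=t_{p(t)}$, where $p\colon T\setminus\{\emptyset\}\to\omega$ satisfies $p(t)<|t|$. For $c[T']=\omega$ it then suffices to guarantee that for every $T'$ some $t\in T'$ of level $\ell_k$ satisfies $p(t)\in\{\ell_0,\ldots,\ell_{k-1}\}$; once this is in place, varying the branching index associated to the hit free position runs $t_{p(t)}$ through all of $\omega$. A short diagonal argument rules out $p$ depending only on $|t|$: given any $f\colon\omega\to\omega$ with $f(m)<m$, one constructs inductively a level sequence $\ell_0<\ell_1<\ldots$ satisfying $f(\ell_k)\notin\{\ell_0,\ldots,\ell_{k-1}\}$ for every $k$, and then an appropriate strong subtree whose tails pin $t_{f(|t|)}$ to an arbitrary value. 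Hence $p(t)$ must genuinely depend on the coordinates of~$t$, not merely on its length.

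The construction is then to choose $p(t)$ from the coordinate sequence of $t$ so that at each level $\ell$, the value set $\{p(t):t\in T(\ell)\}$ is sufficiently spread across $\{0,1,\ldots,\ell-1\}$; a natural candidate is to let $p(t)$ be extracted from extremal or diagonal features of the coordinate tuple of~$t$. Since $T'$ has $\omega^k$-many nodes at level $\ell_k$, distributed over just $\ell_k$ positions with $k$ of them free, a well-designed $p$ forces some node at each sufficiently deep level of $T'$ to have $p(t)$ at a free position, and then varying the corresponding branching index produces all colours. This yields $c[T']=\omega$ for every $T'$.

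The main obstacle is making the construction of $p$ precise and arguing that its ``density'' survives $T'$'s adversarial tail choices. The subtle point is that $T'$'s tail function can simultaneously control countably many positions above each branching level, and $p$ must be defined so that no tail configuration can force $p(t)$ to avoid the free positions across all of $T'$; this is a Halpern--L\"auchli-style density argument adapted to the infinite-branching setting, reflecting exactly the reason why Milliken's theorem fails for $\omega^{<\omega}$ at the level of $\omega$-colourings even though the finite analogue still admits monochromatic subtrees in sufficiently thin trees.
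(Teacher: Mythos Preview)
Your proposal identifies the right shape of argument --- a colouring that reads off information at a data-dependent position --- but it stops short of actually defining $c$. You explicitly flag ``making the construction of $p$ precise'' as the main obstacle, and indeed that is the entire content of the proof; without it you have only a heuristic. There is also a circularity you do not address: since $p(t)$ must depend on the coordinates of $t$ (as your own diagonal argument shows), varying the branching coordinate at position $\ell_j$ may change $p(t)$ itself, so the sentence ``varying the branching index associated to the hit free position runs $t_{p(t)}$ through all of $\omega$'' is not justified as stated. You would need $p$ to be stable under this variation, and you give no reason to expect such a $p$ exists.

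The paper resolves both issues in one stroke with an explicit formula that is close to, but not literally of, your form $c(t)=t_{p(t)}$. Set $w(t)=|t|+\sum_{i<|t|}t(i)$, let $\ell(t)$ be the least $\ell$ with $w(t|_\ell)\ge |t|$, and put $c(t)=w(t|_{\ell(t)})-|t|$, the \emph{overshoot} of the cumulative weight past $|t|$. Given a strong subtree $T'$ with root $r$, pick any level $n>w(r)$ used by $T'$. For every $t\in T'$ with $|t|=n$ one has $w(t|_{|r|})=w(r)<n$, so $\ell(t)>|r|$; choosing the branching index at $r$ to be $n-w(r)-1+x$ forces $w(t|_{|r|+1})=n+x\ge n$, hence $\ell(t)=|r|+1$ and $c(t)=x$. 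The cumulative-sum device makes the ``hit'' position automatically the root level for every such $t$, regardless of the value chosen there, which is exactly the stability your scheme lacks. The colour is then $t_{|r|}$ minus a constant depending only on $r$ and $n$, so morally your intuition was right; the missing ingredient is this weight function and the overshoot trick that pins down $p$.
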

\begin{proof}
Given $t\in T$, we put $w(t) = \lvert t \rvert + \sum_{i < \lvert t\rvert} t(i)$ and we define $\ell(t)$ to be the least $\ell$ such that $w(t|_\ell) \geq \lvert t\rvert$. Note that $\ell(t)$ always exists as $w(t) \geq \lvert t \rvert$.

Define a colouring $c\colon T\to \omega$, putting $c(t) = w(t|_{\ell(t)}) - \lvert t\rvert$. Let $T'$ be an arbitrary strong subtree of $T$ of infinite height, let $r$ be the root of $t'$ and let $n\in \omega$ be such that $n > w(r)$ and $T'\cap T(n) \neq \emptyset$. Put $k = n-w(r)-1$. Now we will prove that $c[T'] = \omega$. For that, fix a colour $x\in \omega$ and find $t\in T'$ such that $\lvert t\rvert = n$ and $r\conc(k+x) \sqsubseteq t$ (such $t$ exists as $T'$ is a strong subtree of $T$). Now, $w(r) < n$, so $\ell(t) > \lvert r\rvert$. On the other hand, $w(r\conc(k+x)) = k+x+w(r)+1 = n+x \geq n$, hence $\ell(t) = \lvert r\rvert + 1$ and $c(t) = x$. This means that for every $x\in \omega$ we can find $t\in T'$ such that $c(t) = x$, hence indeed $c[T'] = \omega$.
\end{proof}
Note that in the construction of the colouring $c$ we essentially only needed to be able to address a particular level $\ell(t)$ on which we knew that the passing numbers attain all possible values. We will now show how this idea can be adapted to find infinite colourings persistent in tree-like embeddings.

\begin{proof}[Proof of Theorem~\ref{thm:inf}]
Let $\str B_0, \str B_1, \ldots$ be an enumeration of all copies of $\str B$ in $\str H$ such that the vertex of $\str B_i$ comes in the enumeration of $\str H$ before the vertex of $\str B_j$ whenever $i < j$. There are infinitely many of them because the infinite disjoint union of copies of $\str B$ is $\mathcal F$-free and thus embeds into $\str H$.

Given a vertex $v\in \str H$ isomorphic to $\str U$ (i.e. it has the same unary relations), we let $b(v)$ be the least integer such that there is $w\in \str B_{b(v)}$ with $w\geq v$, and let $s(v)\colon b(v) \to \omega$ be the sequence satisfying $s(v)_i = x$ if and only if the structure induced by $\str H$ on $\str B_i \cup \{v\}$ is isomorphic to $\str C_x$. 

Given a vertex $v\in \str H$ isomorphic to $\str B$, we let $j(v)$ be such that $\{v\} = B_{j(v)}$. Given $s\in \omega^{<\omega}$ and $n\leq \lvert s\rvert$, we put $w(s) = \lvert s\rvert + \sum_{i\in\lvert s\rvert}s(i)$ and define $\ell(s,n)$ to be the least $\ell$ such that $w(s|_\ell) \geq n$. It exists since $n\leq \lvert s\rvert$.

Let $\str A$ be an $\mathcal F$-free structure on three vertices such that the second vertex in the enumeration is isomorphic to $\str B$, the third one is isomorphic to $\str U$ and there are no non-unary relations in $\str A$. Such $\str A$ clearly exists, for example by making its smallest vertex isomorphic to $\str B$.

For a copy $\widetilde{\str A} \subseteq \str H$, we denote by $n(\widetilde{\str A})$ the level of the first vertex and we put $s(\widetilde{\str A}) = s(v)|_{j(w)}$ where $w$ is the second and $v$ is the third vertex of $\str A$. We define a colouring $c\colon {\str H\choose \str A} \to \omega$ putting
$$c(\widetilde{\str A}) = \begin{cases}
w(s|_{\ell(s,n)}) - n &\text{ if $n\leq \lvert s\rvert$, where }s=s(\widetilde{\str A})\text{ and }n=n(\widetilde{\str A}),\\
0 &\text{ otherwise}.
\end{cases}$$

Let $f\colon\str H\to \str H$ be an arbitrary tree-like embedding of $\str H$ to $\str H$. We will prove that $c[{{f[\str H]} \choose \str A}] = \omega$. Given $p\in \omega$, we will construct a copy $\widetilde{\str A}\in{{f[\str H]} \choose \str A}$ with $c(\widetilde{\str A}) = p$. Assume that the vertex set of $\str H$ is $\omega$ and let $r_0 < r_1$ be arbitrary vertices of $\str H$ such that $r_0$ is isomorphic to $\str B$ and $r_1$ is isomorphic to $\str U$. Let $r_2\in \omega$ be chosen such that:
\begin{enumerate}
  \item $r_2 > r_1$,
  \item $r_2$ has the same unary relation as the first vertex of $\str A$, and
  \item $f(r_2)$ is the $n$-th vertex of $\str H$ for some $n$ with $n > w(s(f(r_1))|_{j(f(r_0))})$.
\end{enumerate}
Such a vertex exists, because the infinite disjoint union of vertices with the unary is $\mathcal F$-free, and so $\str H$ contains infinitely many such vertices. Let $r_3\in \omega$ be an arbitrary vertex such that $r_3 > r_2$, $r_3$ is isomorphic to $\str B$, $j(f(r_3))> n$ and there are no relations on $\{r_2,r_3\}$.

Put $q = n-w(s(f(r_1))|_{j(f(r_0))})-1+p$, $X = \{r_0, r_1, r_2,r_3\}$ and $i=1$. Pick any $x\in \omega$ such that $x > r_3$, $\{r_0, x\}$ is isomorphic to $\str C_q$ and there are no relations on $\{r_2,x\}$ and $\{r_3,x\}$. Use tree-likeness of $f$ for $X$, $i$ and $x$ to obtain $y\in \omega$ such that $y > r_3$, 
$$\tp_\str H(\{f(r_0), f(r_2), f(r_3), f(y)\}\}) = \tp_\str H(\{r_0, r_2, r_3, x\})$$ and 
$$\tp_\str H(\{0,\ldots,f(r_0)-1,f(y)\}) = \tp_\str H(\{0,\ldots,f(r_0)-1,f(r_1)\}).$$
Put $v = f(y)$ and observe that $v$ satisfies the following:
\begin{enumerate}
  \item $\{f(r_0),v\}$ is isomorphic to $\str C_p$, so in particular $v$ is isomorphic to $\str U$,
  \item there are no relations on $\{f(r_2),v\}$ and $\{f(r_3),v\}$,
  \item $v > f(r_3)$ and thus $b(v) > j(f(r_3)) > n$,
  \item $s(f(r_1))|_{j(f(r_0))}\sqsubseteq s(v)$,
  \item $s(v)_{j(f(r_0))} = q$, and
  \item $\str H$ induces a copy of $\str A$ on $\{f(r_2),f(r_3),v\}$.
\end{enumerate}

Let $\widetilde{\str A}$ be the copy of $\str A$ induced on $\{f(r_2),f(r_3), v\}$. Put 
\begin{align*}
s   &= s(\widetilde{\str A})\\
  &= s(v)|_{j(f(r_3))}\\
  &= s(f(r_1))|_{j(f(r_0))}{}\conc q \conc s'
\end{align*}
for some sequence $s'$ and note that $n(\widetilde{\str A}) = n$ (which was defined when we chose $r_2$). By the choice of $r_3$ we know that $\lvert s\rvert > n$ and so $\ell(s,n)$ is defined. By the choice of $n$ we know that $$w(s(f(r_1))|_{j(f(r_0))}) < n$$ and from the choice of $q$ we have that $$w(s(f(r_1))|_{j(f(r_0))}) + q + 1 = n + p,$$ hence $\ell(s,n) = j(f(r_0))+1$. Thus indeed
$$c(\widetilde{\str A}) = w(s|_{j(f(r_0))+1}) - n = p.$$
\end{proof}

\section{Conclusion}\label{sec:conclusion}
Being the first positive big Ramsey result for random structures in infinite languages, this paper helps locate the boundaries of finiteness of big Ramsey degrees. However, there still remain a lot of open problems even regarding unrestricted structures.

While we have proved finiteness of big Ramsey degrees, we do not know their exact values or descriptions. We believe that this is an important problem:
\begin{problem}\label{prob:exact}
Characterise the exact big Ramsey degrees for structures considered in Theorem~\ref{thm:structures}.
\end{problem}

The exact big Ramsey degrees are not known even for the random 3-uniform hypergraph. Below we identify some meaningful intermediate steps towards a full solution to Problem~\ref{prob:exact}.

\begin{problem}\label{prob:exact_steps}
Characterise the exact big Ramsey degrees of the following structures:
\begin{enumerate}
  \item The random 3-uniform hypergraph,
  \item the random $L$-hypergraph where $L$ is finite with no unary relations,
  \item the random $L$-hypergraph where $L$ has no unary relations and finitely many relations of each arity, 
  \item the random $L$-hypergraph where $L$ has countably many unary relations and finitely many relations of each arity.
\end{enumerate}
\end{problem}

We believe that already for the 3-uniform hypergraph one will encounter the most crucial differences compared to when the language is only binary. However, having only one relation which is ternary, one will likely be able to phrase the constructions rather concretely. We expect that abstracting the ideas and coming up with a suitable formalism will be the biggest obstacle when generalising this to finite $L$ with no unaries. We believe that there are fundamental differences between finite $L$ and $L$ with finitely many relations of each arity (cf. Conjecture~\ref{conj:no_enveloping}), it is however possible that these differences do not materialise in the context of Problem~\ref{prob:exact_steps} and the proofs will end up being the same. Adding unaries tends to be a rather technical nuisance and we expect this to be that case in Problem~\ref{prob:exact_steps} as well.

\subsection{Infinite big Ramsey degrees}\label{subsec:inf}
In Section~\ref{sec:inf} we proved that one cannot hope to strengthen our methods to prove big Ramsey results with infinitely many binary relations. We are confident that this argument generalises to higher arities and the following problem has a solution (we believe that the concept of \emph{weak types} from~\cite{typeamalg} leads to such a generalisation).
\begin{problem}
Define a concept of tree-likeness such that all structural embedding are tree-like and prove an analogue of Corollary~\ref{cor:inf} for the cases when there are infinitely many relations of some higher arity. 
\end{problem}

However, even for binary relations the situation is interesting. Corollary~\ref{cor:inf} shows that with respect to tree-like embeddings, the infinite-edge-coloured random graph has infinite big Ramsey degree for a particular triple. On the other hand, it is indivisible by the standard argument (try to embed it into one colour class, if it fails then all realisations of some type are in the other colour class which hence contains a monochromatic copy). We do not know what the big Ramsey degree of an edge of some particular colour is:
\begin{question}
Given the infinite-edge-coloured random graph (see Example~\ref{ex:rado}) and its substructure $\str A$ on two vertices, is the big Ramsey degree of $\str A$ finite? Is it finite with respect to tree-like embeddings?\footnote{\label{footnote:solved}After this paper was submitted, Hubička and Zucker answered this question, proving that the big Ramsey degree of a 2-element substructure of the infinite-edge-coloured random graph is infinite.}
\end{question}

Of course, big Ramsey degrees in general for this structure remain open as well:
\begin{question}\label{q:rado}
Does the infinite-edge-coloured random graph have finite big Ramsey degrees?\footnote{See footnote~\ref{footnote:solved}}
\end{question}
A positive answer to this question would likely require developing new non-tree-like methods for finding oligochromatic copies, which would be a major event for the area. Note that this shows that big Ramsey degrees are much more subtle than small Ramsey degrees, because when we only want to find a finite oligochromatic structure, the whole problem only touches finitely many colours and hence reduces to the finite-language problem.

In fact, it is even more subtle than this: Consider the countable infinite-edge-coloured graph $\str G$ such that edges containing the $i$-th vertex only use the first $i$ colours. This is a countable infinite-edge-coloured graph which is universal for all \emph{finite} infinite-edge-coloured graphs as well as all countable finitely-edge-coloured graphs, but it does not embed the infinite-edge-coloured random graph. At the same time, an application of the Milliken theorem on the tree which branches $n+1$ times on level $n$ shows that this graph has finite big Ramsey degrees, and the Laflamme--Sauer--Vuksanovic arguments~\cite{Laflamme2006} give the exact big Ramsey degrees for this structure (which in turn recovers the exact big Ramsey degrees of all finite-edge-coloured random graphs).

We conjecture that the answers to these questions are negative. In fact, we believe that Theorem~\ref{thm:structures} is tight if there are finitely many unary relations (see the following paragraphs for examples why one has to assume this):

\begin{conjecture}\label{conj:inf}
Let $L$ be a relational language with finitely many unary relations and infinitely many relations of some arity $a\geq 2$. Let $\str H$ be a countable unrestricted $L$-structure realising all relations from $L$. Then there is a finite $L$-structure $\str A$ whose big Ramsey degree in $\str H$ is infinite. Moreover, the number of vertices of $\str A$ only depends on $a$.
\end{conjecture}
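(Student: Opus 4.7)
The plan is to extend Theorem~\ref{thm:inf} from binary to arity $a\geq 2$ in the tree-like category, then confront the non-tree-like case, which is the real obstacle. The target structure $\str A$ should have $a+1$ vertices, matching the pattern $|\str A|=3$ when $a=2$ in Theorem~\ref{thm:inf}; this gives the dependence on $a$ claimed in the ``Moreover'' clause.

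First, I would fix an $(a-1)$-element $\mathcal F$-free ``base'' structure $\str B$ and a $1$-element $\mathcal F$-free structure $\str U$. Since there are infinitely many relations of arity $a$ and $\str H$ realises them all, by the universality of $\str H$ there exist infinitely many pairwise non-isomorphic $\mathcal F$-free extensions $\str C_0,\str C_1,\ldots$ of $\str B\cup \str U$ (with the $\str U$-vertex added last in the enumeration), each obtained by placing a different $a$-ary tuple through the $a$ vertices. Take $\str A$ to be an $(a+1)$-vertex $\mathcal F$-free structure comprising one ``lead'' vertex (unrelated to the rest), a copy of $\str B$, and a copy of $\str U$. Then enumerate all copies of $\str B$ in $\str H$ as $\str B_0,\str B_1,\ldots$ (infinitely many exist because the disjoint union of copies of $\str B$ is $\mathcal F$-free), and for each $\str U$-vertex $v$ let $s(v)\colon b(v)\to \omega$ record which extension $\str C_{s(v)_i}$ is realised by $v$ over $\str B_i$. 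The coloring $c$ is then defined exactly as in Theorem~\ref{thm:inf}, using the weight function $w$ and target level $\ell(s,n)$ from Proposition~\ref{prop:hl} applied to the signature of the $\str U$-vertex of a copy of $\str A$ and the level of its lead vertex.

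For the tree-like case, the argument in Theorem~\ref{thm:inf} generalises verbatim: given a tree-like $f\colon\str H\to\str H$ and a target colour $p$, one picks $r_0$ corresponding to $\str B$, $r_1$ to $\str U$, a lead vertex $r_2$ at a level satisfying the required weight inequality, and a high $r_3$; then the tree-like property produces a $y$ such that $v=f(y)$ realises the chosen $\str C_q$-extension over $f[\str B_{j(f(r_0))}]$ (for the appropriate shift $q$), while preserving its type over $\{0,\ldots,f(r_0)-1\}$. This forces $\ell(s(v),n)=j(f(r_0))+1$ and produces a copy of $\str A$ inside $f[\str H]$ of colour $p$.

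The hard part is removing the tree-like assumption: for a general embedding $f$ the copies $f[\str B_i]$ can be arranged so that $f[\str H]$ looks like Example~\ref{ex:rado}, with all the relational information ``collapsed'' onto early levels, after which the weight function used above loses its grip. The plan here would be (i) to redefine the coloring intrinsically in terms of weak types, so that $c(\widetilde{\str A})$ depends on invariants of $\widetilde{\str A}$ together with a canonical neighbourhood of witnessing copies of $\str B$ rather than on the fixed external enumeration, and then (ii) to prove a pigeonhole lemma stating that inside every $f[\str H]$ one can still find, for each target colour $p$, a copy of $\str A$ with the prescribed weak-type data. Step (ii) is where genuinely new techniques are needed, since Milliken-style arguments fail once the branching is infinite; I expect this to require a diagonal construction extracting, level by level, copies of $\str B$ in $f[\str H]$ whose extensions realise prescribed ``generic'' patterns, and then exploiting the universality of $\str H$ to glue them into a copy of $\str A$ of the right colour. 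This reduction from non-tree-like to tree-like behaviour, rather than the combinatorics of the coloring itself, is the main obstacle and is consistent with the authors' remark that a full solution to Question~\ref{q:rado} would demand non-tree-like methods.
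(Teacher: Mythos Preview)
This statement is a \emph{conjecture} in the paper; there is no proof given, so there is nothing to compare your proposal against. What you have written is a plan of attack on an open problem, and you are candid that the decisive step is missing.

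On the tree-like part: your outline is in the right spirit, but the claim that the argument of Theorem~\ref{thm:inf} ``generalises verbatim'' is too optimistic. The paper's notion is explicitly \emph{$1$-tree-like}, and the authors themselves pose as an open problem (immediately after Corollary~\ref{cor:inf}) the task of formulating the correct higher-arity analogue, pointing to the weak types of~\cite{typeamalg}. Once $\str B$ has $a-1\geq 2$ vertices, the copies $\str B_i$ are no longer single vertices tied to the enumeration of $\str H$; you must choose an enumeration of copies and check that the second clause of $1$-tree-likeness (which only fixes the type of $f(y)$ over an initial segment $\{0,\ldots,f(x_0)-1\}$) really pins down an initial segment of your sequence $s(f(y))$. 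This can be arranged, but it is an additional argument, not a verbatim repetition. You also need to verify that the weight/level mechanism of Proposition~\ref{prop:hl} still works when the length of $s(v)$ grows polynomially rather than linearly in the level of $v$.

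On the non-tree-like part: you correctly locate the entire content of the conjecture here, and your steps~(i) and~(ii) are not a proof sketch but a restatement of what is unknown. The footnote to Question~\ref{q:rado} records that Hubi\v{c}ka and Zucker have since shown that already a \emph{two}-vertex substructure of the infinite-edge-coloured random graph has infinite big Ramsey degree; this settles the binary case of the conjecture (and incidentally suggests the bound on $|A|$ may be $a$ rather than $a+1$), but the general statement remains open and your proposal does not add an idea beyond what the paper already says.
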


\medskip

If one allows infinitely many unary relations, there are structures with yet different behaviour. Consider, for example, the language with infinitely many unary relations and the universal structure where each vertex is in exactly one unary. One can define binary relations on this structure by the pair of unaries on the respective vertices. This structure has infinitely many binary relations but it has finite big Ramsey degrees by Theorem~\ref{thm:structures} (in fact, to prove it it suffices to repeat the proof of Theorem~\ref{thm:main} on $\omega$ which has all big Ramsey degrees equal to one by the Ramsey theorem).

Or consider language $L$ with unary relations $U^1,\ldots$ and binary relations $\rel{}{1},\ldots$, let every vertex have exactly one unary relation, only allow any binary relations between vertices from the same unary relation, and only allow binary relations $\rel{}{1},\ldots,\rel{}{i}$ between vertices from unary $U^i$. In other words, look at the disjoint union of infinitely many edge-coloured random graphs such that the $i$-th of them has all vertices in the unary $U^i$ and has $i$ colours. This structure is unrestricted, but Theorem~\ref{thm:structures} does not capture it because of the infinitely many binary relations. However, for a fixed finite $\str A$, we can restrict ourselves to the substructure induced on the unaries which appear in $\str A$ for which Theorem~\ref{thm:structures} can be applied. Since the substructures on different unaries are disjoint, one can then simply add the remaining unaries back.

However, if one allows binary relations even between vertices with different unary relations, but vertices in $U^i$ can only participate in $\rel{}{1},\ldots,\rel{}{i}$ then the argument from the previous paragraph no longer applies, because we have no guarantee that the oligochromatic copy on finitely many unaries will be generic with respect to the rest, that is, that one will be able to extend it to a full copy. (For example, what might happen is that $U^i$ is present in $\str A$, $U^j$ is not and the monochromatic copy of the restriction will be such that there are no edges between its vertices from $U^i$ and vertices from $U^j$.)

We believe that such structures still have finite big Ramsey degrees and that in order to prove it, one only needs to use a stronger statement of Theorem~\ref{thm:structures} which promises that the oligochromatic copy comes from a vector tree which will make it possible to extend this copy to a copy using all unaries. It seems that a proper formulation of Conjecture~\ref{conj:inf} should speak about the tree of types having a dense set of vertices on which it branches uniformly.

\subsection{Small Ramsey degrees and the partite lemma}
Note that the proof of Proposition~\ref{prop:main} actually gives something stronger than just a finite number of colours: It proves that the colour of every copy only depends on how it embeds into its envelope (this is usually called the \emph{embedding type}). Part of the job when characterising the exact big Ramsey degrees is constructing embeddings which realise as few embedding types as possible.

It was discovered by Hubička~\cite{Hubicka2020CS} that even without knowing the exact big Ramsey degrees, one can often use the big Ramsey upper bound to obtain exact small Ramsey degrees. For unrestricted structures, the argument is as follows: Given a finite (enumerated) structure $\str A$, pick an arbitrary relation $\rel{}{} \in L$ of arity $a\geq 2$ and extend $\str A$ to $\str A'$ adding a vertex $b_v$ for every $v\in A$, putting $b_v < w$ for every $v,w\in A$ and $b_v < b_w$ if and only if $v > w$. Finally, add vertices $c_1, \ldots, c_{a-2}$ which come very first in the enumeration. $\str A$ will be a substructure of $\str A'$ and we will only add relations $(c_1,\ldots,c_{a-2},b_v,v)\in \nbrel{\str A'}{}$ for every $v\in A$.

Note that the lexicographic order on $A$ within $\str A'$ is the same as the enumeration of $\str A$ and that if $\str B$ is a substructure of $\str A$ then $\str B'$ is a substructure of $\str A'$. Moreover, $\str A'$ describes one particular embedding type of $\str A$ (namely the one where first all vertices branch and only then they are coded, and they branch so that the lex-order coincides with the enumeration).

If we only colour this particular embedding type of $\str A'$, the arguments in Proposition~\ref{prop:main} give us a monochromatic subtree. Given any finite $\str B$ which contains $\str A$ as a substructure, this subtree will contain a copy of (the embedding type described by) $\str B'$ in which all copies of (embedding types described by) $\str A'$ will be monochromatic. However, as we noted in the previous paragraphs, every copy of $\str A$ inside this $\str B$ has embedding type described by $\str A'$, hence in fact all copies of $\str A$ inside this $\str B$ will be monochromatic. By compactness we did not need to find the whole monochromatic subtree, just a finite initial segment of it, which contains a unique copy of $\str C'$ for some $\str C$ which hence satisfies $\str C\longrightarrow(\str B)^\str A_{k,1}$. Thus we get, in particular, a new proof of the Abramson--Harrington theorem~\cite{Abramson1978}, or in other words, the Nešetřil--R\"odl theorem without any forbidden substructures~\cite{Nevsetvril1977b}. (This will appear in full detail elsewhere.)

\medskip

Using the Abramson--Harrington theorem one can give a simple proof of the partite lemma of Nešetřil and Rödl~\cite{Nevsetvril1989}: We can consider $\str A$-partite structures (say induced, but the non-induced variant can also be done in this way), as structures with unary marks on vertices which describe the projection to $\str A$. The constraints of $\str A$-partiteness are saying that some particular combination of unaries is forbidden to be in a relation together (i.e. they have a projection to $\str A$ where the relation is not present).

Given an $\str A$-partite structure $\str B$ let $\str B^-$ be its reduct forgetting the unaries. By the Abramson--Harrington theorem there is $\str C^-$ such that $\str C^- \longrightarrow (\str B^-)^{\str A}_2$. Let $\str C$ be the $\str A$-partite structure with vertex set $(C^-\times A)$ where the unaries are given by the projection to $A$ and a tuple $((u_i,x_i))_{i\in n}$ is in a relation $\rel{C}{}$ if and only if $(u_i)_{i\in n}\in \nbrel{\str C^-}{}$ and $(x_i)_{i\in n}\in \rel{A}{}$.

A substructure of $\str C$ is \emph{transversal} if it does not contain any two vertices of the form $(u,x), (u,y)$ for some $u\in C^-$ and $x\neq y\in A$. Note that a substructure of a transversal structure is again transversal. By a similar argument as in the proof of Theorem~\ref{thm:main} we can show that for every colouring of transversal copies of $\str A$ in $\str C$ there is a transversal copy of $\str B$ in which all transversal (and thus actually all) copies of $\str A$ have the same colour, thereby proving the partite lemma. Note that conditions of being $\str A$-partite can be described by a set of forbidden substructures which are covered by a relation. From this point of view, Theorem~\ref{thm:structures} can be considered as a big Ramsey generalisation of the partite lemma.

\section*{Acknowledgments}
We would like to thank Stevo Todor\v cevi\'c for discussions about the infinite-edge-coloured random graph, and Andy Zucker for helpful comments regarding the presentation of our results. We would also like to thank all three anonymous referees for valuable comments which significantly helped improve the paper.

\bibliographystyle{amsplain}

\begin{thebibliography}{10}

\bibitem{Abramson1978}
Fred~G. Abramson and Leo~A. Harrington, \emph{Models without indiscernibles},
  Journal of Symbolic Logic \textbf{43} (1978), 572--600.

\bibitem{typeamalg}
Andr\'es Aranda, Samuel Braunfeld, David Chodounsk{\'y}, Jan Hubi{\v{c}}ka,
  Mat{\v{e}}j Kone{\v{c}}n{\'y}, Jaroslav Ne{\v{s}}et{\v{r}}il, and Andy
  Zucker, \emph{Type-respecting amalgamation and big {R}amsey degrees},
  Proceedings of the 12th European Conference on Combinatorics, Graph Theory
  and Applications EUROCOMB’23 (Daniel Kráľ and Jaroslav
  Ne{\v{s}}et{\v{r}}il, eds.), MUNI Press, 2023, pp.~57--65.

\bibitem{Hubickabigramsey2}
Martin Balko, David Chodounsk{\' y}, Natasha Dobrinen, Jan Hubi{\v c}ka, Mat{\v
  e}j Kone{\v c}n{\' y}, Lluis Vena, and Andy Zucker, \emph{Big {R}amsey
  degrees of the generic partial order}, Extended Abstracts EuroComb 2021
  (Cham) (Jaroslav Ne{\v{s}}et{\v{r}}il, Guillem Perarnau, Juanjo Ru{\'e}, and
  Oriol Serra, eds.), Springer International Publishing, 2021, pp.~637--643.

\bibitem{Balko2021exact}
Martin Balko, David Chodounsk{\' y}, Natasha Dobrinen, Jan Hubi{\v c}ka, Mat{\v
  e}j Kone{\v c}n{\' y}, Lluis Vena, and Andy Zucker, \emph{Exact big {R}amsey degrees for finitely constrained binary free
  amalgamation classes}, To appear in Journal of the European Mathematical
  Society, arXiv:2110.08409, 2021.

\bibitem{Balko2023}
Martin Balko, David Chodounsk{\' y}, Natasha Dobrinen, Jan Hubi{\v c}ka, Mat{\v
  e}j Kone{\v c}n{\' y}, Lluis Vena, and Andy Zucker, \emph{Characterisation of the big {R}amsey degrees of the generic
  partial order}, Submitted, arXiv:2303.10088, 2023.

\bibitem{Balko2023Sucessor}
Martin Balko, David Chodounsk{\' y}, Natasha Dobrinen, Jan Hubi{\v c}ka,
  Jaroslav Ne{\v{s}}et{\v{r}}il, Mat{\v e}j Kone{\v c}n{\' y}, and Andy Zucker,
  \emph{Ramsey theorem for trees with successor operation}, Preprint,
  arXiv:2311.06872, 2023.

\bibitem{balko2021big}
Martin Balko, David Chodounsk{\'y}, Jan Hubi{\v{c}}ka, Mat{\v{e}}j
  Kone{\v{c}}n{\'y}, Jaroslav Ne{\v{s}}et{\v{r}}il, and Lluis Vena, \emph{Big
  {R}amsey degrees and forbidden cycles}, Extended Abstracts EuroComb 2021
  (Jaroslav Ne{\v{s}}et{\v{r}}il, Guillem Perarnau, Juanjo Ru{\'e}, and Oriol
  Serra, eds.), Springer International Publishing, 2021, pp.~436--441.

\bibitem{Hubickabigramsey}
Martin Balko, David Chodounsk{\' y}, Jan Hubi{\v c}ka, Mat{\v e}j Kone{\v
  c}n{\' y}, and Lluis Vena, \emph{Big {R}amsey degrees of 3-uniform
  hypergraphs}, Acta Mathematica Universitatis Comenianae \textbf{88} (2019),
  no.~3, 415--422, Extended abstract for Eurocomb 2019.

\bibitem{Hubicka2020uniform}
Martin Balko, David Chodounsk{\'y}, Jan Hubi{\v{c}}ka, Mat{\v{e}}j
  Kone{\v{c}}n{\'y}, and Lluis Vena, \emph{Big {R}amsey degrees of 3-uniform
  hypergraphs are finite}, Combinatorica \textbf{42} (2022), no.~2, 659--672.

\bibitem{coulson2022SDAP}
Rebecca Coulson, Natasha Dobrinen, and Rehana Patel, \emph{Fraisse structures
  with {S}{D}{A}{P}+, {P}art {I}{I}: {S}imply characterized big {R}amsey
  structures}, arXiv:2207.06505, 2022.

\bibitem{devlin1979}
Denis Devlin, \emph{Some partition theorems and ultrafilters on $\omega$},
  Ph.D. thesis, Dartmouth College, 1979.

\bibitem{dobrinen2017universal}
Natasha Dobrinen, \emph{The {R}amsey theory of the universal homogeneous
  triangle-free graph}, Journal of Mathematical Logic \textbf{20} (2020),
  no.~02, 2050012.

\bibitem{dobrinen2019ramsey}
Natasha Dobrinen, \emph{The {R}amsey theory of {H}enson graphs}, Journal of Mathematical
  Logic \textbf{23} (2023), no.~01, 2250018.

\bibitem{dobrinen2021ramsey}
Natasha Dobrinen, \emph{Ramsey theory of homogeneous structures: current trends and open
  problems}, International Congress of Mathematicians, EMS Press, 2023,
  pp.~1462--1486.

\bibitem{dobrinen2016rainbow}
Natasha Dobrinen, Claude Laflamme, and Norbert Sauer, \emph{Rainbow {R}amsey
  simple structures}, Discrete Mathematics \textbf{339} (2016), no.~11,
  2848--2855.

\bibitem{erdos1974unsolved}
Paul Erd{\H{o}}s and Andr{\'a}s Hajnal, \emph{Unsolved and solved problems in
  set theory}, Proceedings of the Tarski Symposium (Berkeley, Calif., 1971),
  Amer. Math. Soc., Providence, vol.~1, 1974, pp.~269--287.

\bibitem{Fraisse1953}
Roland Fra{\"\i}ss{\'e}, \emph{Sur certaines relations qui g\'en\'eralisent
  l'ordre des nombres rationnels}, Comptes Rendus de l'Academie des Sciences
  \textbf{237} (1953), 540--542.

\bibitem{Galvin68}
Fred Galvin, \emph{Partition theorems for the real line}, Notices Of The
  American Mathematical Society \textbf{15} (1968), 660.

\bibitem{Galvin69}
Fred Galvin, \emph{Errata to ``{P}artition theorems for the real line''}, Notices
  Of The American Mathematical Society \textbf{16} (1969), 1095.

\bibitem{Hubicka2020CS}
Jan Hubi{\v{c}}ka, \emph{Big {R}amsey degrees using parameter spaces},
  arXiv:2009.00967, 2020.

\bibitem{Kechris2005}
Alexander~S. Kechris, Vladimir~G. Pestov, and Stevo Todor{\v c}evi{\' c},
  \emph{Fra{\"\i}ss{\'e} limits, {R}amsey theory, and topological dynamics of
  automorphism groups}, Geometric and Functional Analysis \textbf{15} (2005),
  no.~1, 106--189.

\bibitem{laflamme2010partition}
Claude Laflamme, Lionel Nguyen Van~Thé, and Norbert~W. Sauer, \emph{Partition
  properties of the dense local order and a colored version of {M}illiken’s
  theorem}, Combinatorica \textbf{30} (2010), no.~1, 83--104.

\bibitem{Laflamme2006}
Claude Laflamme, Norbert~W. Sauer, and Vojkan Vuksanovic, \emph{Canonical
  partitions of universal structures}, Combinatorica \textbf{26} (2006), no.~2,
  183--205.

\bibitem{laver1984products}
Richard Laver, \emph{Products of infinitely many perfect trees}, Journal of the
  London Mathematical Society \textbf{2} (1984), no.~3, 385--396.

\bibitem{Milliken1979}
Keith~R. Milliken, \emph{A {R}amsey theorem for trees}, Journal of
  Combinatorial Theory, Series A \textbf{26} (1979), no.~3, 215--237.

\bibitem{Nevsetvril1989}
Jaroslav Ne{\v{s}}et{\v{r}}il and Vojt{\v{e}}ch R{\"o}dl, \emph{The partite
  construction and {R}amsey set systems}, Discrete Mathematics \textbf{75}
  (1989), no.~1, 327--334.

\bibitem{Nevsetvril1977b}
Jaroslav Ne\v{s}et\v{r}il and Vojt{\v{e}}ch R{\"o}dl, \emph{Partitions of
  finite relational and set systems}, Journal Combinatorial Theory, Series A
  \textbf{22} (1977), no.~3, 289--312.

\bibitem{NVT2009}
Lionel Nguyen Van~Th{\'e}, \emph{Ramsey degrees of finite ultrametric spaces,
  ultrametric {U}rysohn spaces and dynamics of their isometry groups}, European
  Journal of Combinatorics \textbf{30} (2009), no.~4, 934--945.

\bibitem{Sauer2006}
Norbert~W. Sauer, \emph{Coloring subgraphs of the {R}ado graph}, Combinatorica
  \textbf{26} (2006), no.~2, 231--253.

\bibitem{Sierpinski1933}
Waclaw Sierpi\'nski, \emph{Sur un probl\`eme de la th\'eorie des relations},
  Annali della Scuola Normale Superiore di Pisa -- Scienze Fisiche e Matematiche
  \textbf{2e s{\'e}rie, 2} (1933), no.~3, 285--287 (fr).

\bibitem{todorcevic2010introduction}
Stevo Todorcevic, \emph{Introduction to {R}amsey spaces}, vol. 174, Princeton
  University Press, 2010.

\bibitem{zucker2020}
Andy Zucker, \emph{On big {R}amsey degrees for binary free amalgamation
  classes}, Advances in Mathematics \textbf{408} (2022), 108585.

\end{thebibliography}



\begin{aicauthors}
\begin{authorinfo}[sam]
  Samuel Braunfeld\\
  Computer Science Institute of Charles University (IÚUK)\\
  Charles University\\
  Ma\-lo\-stransk\'e n\'a\-m\v es\-t\'\i~25\\
  Praha~1, Czech Republic\\
  sbraunfeld\imageat{}iuuk\imagedot{}mff\imagedot{}cuni\imagedot{}cz
\end{authorinfo}

\begin{authorinfo}[david]
  David Chodounsk\'y\\
  Department of Applied Mathematics (KAM)\\
  Charles University\\
  Ma\-lo\-stransk\'e n\'a\-m\v es\-t\'\i~25\\
  Praha~1, Czech Republic\\
  and\\
  Institute of Mathematics of the Czech Academy of Sciences\\
  \v{Z}itn\'a~25\\
  Praha~1, Czech Republic\\
  chodounsky\imageat{}math\imagedot{}cas\imagedot{}cz
\end{authorinfo}

\begin{authorinfo}[noe]
  No\'e de Rancourt\\
  Univ. Lille, CNRS, UMR 8524 - Laboratoire Paul Painlev\'e\\
  F-59000 Lille,  France\\
  nderancour@univ-lille.fr
\end{authorinfo}

\begin{authorinfo}[honza]
  Jan Hubi\v cka\\
  Department of Applied Mathematics (KAM)\\
  Charles University\\
  Ma\-lo\-stransk\'e n\'a\-m\v es\-t\'\i~25\\
  Praha~1, Czech Republic\\
  hubicka\imageat{}kam\imagedot{}mff\imagedot{}cuni\imagedot{}cz
\end{authorinfo}

\begin{authorinfo}[jamal]
  Jamal Kawach\\
  Department of Mathematics\\
  University of Toronto\\
  Toronto, Canada\\
  M5S 2E4\\
  jkawach\imageat{}math\imagedot{}toronto\imagedot{}edu
\end{authorinfo}

\begin{authorinfo}[matej]
  Mat\v ej Kone\v{c}n\'{y}\\
  Department of Applied Mathematics (KAM)\\
  Charles University\\
  Ma\-lo\-stransk\'e n\'a\-m\v es\-t\'\i~25\\
  Praha~1, Czech Republic\\
  and\\
  Institute of Algebra\\
  TU Dresden\\
  Dresden, Germany\\
  matej\imagedot{}konecny\imageat{}tu-dresden\imagedot{}de
\end{authorinfo}

\end{aicauthors}

\end{document}